\documentclass[11pt]{amsart}

\usepackage{amsmath}
\usepackage{amssymb}
\usepackage{graphicx}
\usepackage{xcolor}

\newtheorem{theorem}{Theorem}[section]

\newtheorem{lemma}[theorem]{Lemma}
\newtheorem{proposition}[theorem]{Proposition}
\theoremstyle{definition}

\newtheorem{remark}[theorem]{Remark}

\numberwithin{equation}{section}

\title[On a class of critical Schr\"odinger-Poisson systems]{On a class of critical Schr\"odinger-Poisson systems involving the $(p,q)$-Laplacian}

\author[L. Baldelli]{Laura Baldelli}
\address[L. Baldelli]{Institute for Analysis, Karlsruhe Institute of Technology (KIT) D-76128 Karlsruhe, Germany}
\email{{\tt laura.baldelli@kit.edu}}

\author[R. Filippucci]{Roberta Filippucci}
\address[R. Filippucci]{Department of Mathematics, University of Perugia, Via Vanvitelli 1, 06123 Perugia,  Italy}
\email{\tt roberta.filippucci@unipg.it}

\keywords{Schr\"odinger Poisson systems, $(p,q)$-Laplacian, Mountain pass solutions, Concentration compactness.}

\subjclass[2020]{35J20, 35B08, 35J62}

\begin{document}

\begin{abstract}
   This paper investigates a class of Schr\"odinger-Poisson systems in $\mathbb R^3$ featuring the $(p,q)$-Laplacian operator and a combination of critical and subcritical nonlinearities in the Schr\"odin\-ger equation while the $m$-Laplacian and a power type nonlinearity in the Poisson's one. We consider both the attractive and repulsive cases, which correspond to different signs in front of the nonlocal term. While most existing literature relies on auxiliary functionals or specialized techniques to overcome the lack of compactness and ensure the boundedness of Palais-Smale sequences, we employ a direct variational approach. By applying the Mountain Pass Theorem and concentration compactness principles, we establish the existence of positive solutions. A careful analysis is conducted to identify the parameter ranges for which the Mountain Pass level falls within the compactness threshold, despite the technical challenges posed by the unbalanced growth of the operator and the nonlocal interaction.
\end{abstract}

\maketitle


{\itshape Dedicated to Professor Laurent V\'eron on the occasion of  his 75th birthday}

\section{Introduction}

In this paper we investigate existence results of quasilinear Schr\"odinger-Poisson systems in $\mathbb R^3$ driven by the $(p,q)$-Laplacian operator with a critical nonlinearity perturbed by a subcritical term, namely
{ \begin{equation} \label{maineq}\tag{$\mathcal{P}_\pm$} \begin{cases}
    \mathcal L(u)\pm\lambda \phi |u|^{\vartheta-2}u=\beta \mathcal{W} |u|^{ \tau -2}u+\mathcal{K}|u|^{p^* -2}u, \,\,  &\text{in }  
    \mathbb R^3\\
    -\Delta_m\phi =|u|^{\vartheta},   &\text{in 
   } \mathbb R^3
\end{cases} \end{equation}} 
where  $\mathcal L(u):=-\Delta_pu-\Delta_qu+(|u|^{p -2}+|u|^{q -2})u$, $1<q\le p<3, \Delta_su=\mbox{div}(|\nabla u|^{s-2}\nabla u)$ is the $s$-Laplacian operator with $s>1$, $\,\lambda, \beta>0$, $1<m<3$, $p\le \tau < p^*$,  with $p^*=3p/(3-p)$ being the critical Sobolev exponent in dimension $N=3$, and 
\begin{equation}\label{defthetamain}
\max\left\{1,\frac{q}{(m^*)'}\right\}<\vartheta<\frac{p^*}{(m^*)'}.
\end{equation}
The nontrivial weights $\mathcal{K}=\mathcal{K}(x),\mathcal{W}=\mathcal{W}(x)$ satisfy
\begin{equation}\label{K}
0\le \mathcal{K}\in L^\infty(\mathbb R^3)\cap C(\mathbb R^3),
\end{equation}
\begin{equation}\label{W}
0\le \mathcal{W}\in L^{\eta}(\mathbb R^3), \quad \eta=\frac{p^*}{p^*-\tau}, \quad \mathcal{W}>0 \,\, \text{in}\,\, \Omega_\mathcal{W}\subset\mathbb R^3, \quad |\Omega_\mathcal{W}|>0.
\end{equation}

As indicated by the choice of sign in \eqref{maineq}, we investigate two distinct classes of Schr\"odinger-Poisson systems. The specific sign adopted introduces unique analytical challenges in each case. Broadly speaking, when the nonlocal term appears with a negative sign, it acts in opposition to the norm induced by the operator on the left-hand side and is, therefore, treated as a nonlinearity.
Conversely, when the nonlocal term is positive, it reinforces the norm while opposing the nonlinearity. Although both problems remain variational, the functional geometry is significantly altered by this change in sign, necessitating distinct methodological approaches that will be detailed below.

Semilinear Schr\"odinger-Poisson systems of the form 
\begin{equation} \label{Lapla}\tag{$SP_\pm$} \begin{cases} 
    -\Delta u+u\pm \phi u=f(x,u), \,\,  &\text{in} \,\, 
    \mathbb R^3,\\
    -\Delta\phi =u^2,   &\text{in }\mathbb R^3,
\end{cases} \end{equation}
describe the interaction of charged particles with an electromagnetic or gravitational field generated by the particles themselves. Here, the unknown $u$ arises from the standing wave ansatz $\psi(x,t) = e^{-it} u(x)$ for the Schr\"odinger equation where $\psi$ is the unknown, while $\phi$ represents the potential.

If the particles move in their own gravitational field, where the field is generated by the particle probability density via the classical Newton field equation, we arrive to $(SP_-)$.
Conversely, if the particles are electrically charged, the long-range electrostatic interaction can be effectively modeled by a potential term, 
we end up with $(SP_+)$, see \cite{vaira}.

In both cases, the term $\phi u$ in the first equation is nonlocal since, from the second equation, $\phi$ can be represented as the convolution of $u^2$ with the fundamental solution of the Laplacian. Finally, the nonlinear term $f(x,u)$ models the mutual interactions between particles. When $f(x,u) \equiv 0$, system \eqref{Lapla} reduces to the   so called  Schr\"odinger-Maxwell system. 

Pioneering works concerning \eqref{Lapla} with the negative sign include those by Benci and Fortunato \cite{articolo_6_1, articolo_7_1}, who investigated an eigenvalue-type problem. Subsequently, the existence of positive radial solutions for \eqref{Lapla} with superlinear and subcritical power-type nonlinearities was studied in \cite{articolopc_19}, see also \cite{aprile, aprileW} for investigations into the case $\lambda \to 0$. Due to the variational structure of \eqref{Lapla}, the aforementioned papers employ a variational approach, so that solutions are obtained as critical points of the associated energy functional. Regarding non-radial solutions, we highlight the work of Azzollini and Pomponio \cite{AP}, in which the existence of a ground state solution is established via a concentration-compactness argument on suitable measures to restore compactness.

We specifically highlight the work of Vaira \cite{vaira}, who investigated the system
\begin{equation}\label{eqvar}\begin{cases}
-\Delta u+u\pm\lambda K(x)\phi u=a(x)|u|^{k-2}u,&\qquad \text{in}\,\,\mathbb R^3\\
-\Delta\phi=K(x) u^2,&\qquad \text{in}\,\,\mathbb R^3
\end{cases}\end{equation}
for both sign cases, assuming $K$ and $a$ are nonnegative real functions satisfying suitable conditions. The author proves the existence of positive ground states, i.e. solutions with minimal energy, across the entire range $k \in (2, 6(=2^*))$ when the nonlocal term in \eqref{eqvar} carries a negative sign. Conversely, when the nonlocal term is positive, the competition between this term and the nonlinearity leads to substantially different scenarios depending on $k$. Consequently, the analysis in \cite{vaira} is restricted to $k \in (4,6)$. For further results, see also \cite{AmbrosettiMilan}, where \eqref{eqvar} is treated with the positive sign.

Owing to their robust physical foundations, systems analogous to \eqref{Lapla} and their generalizations have been subjects of extensive investigation over the past several decades. Schr\"odinger-Poisson systems featuring more general nonlinearities have been addressed in numerous studies \cite{amru, coc, avenia, articolopc_19, DSW23}. Furthermore, the case of the critical (conformal) dimension has been explored starting with the seminal work \cite{CW} and followed by \cite{GC, CLR}, employing a diverse range of analytical approaches.

Motivated by the physical arguments presented above, this paper focuses on the three-dimensional case, although the mathematical framework developed herein can be extended to any dimension $N \geq 2$ with suitable modifications. Specifically, we investigate generalizations of \eqref{Lapla} and \eqref{eqvar} involving operators with unbalanced growth, such as the $(p,q)$-Laplacian, which arises in models of nonlinear elasticity \cite{Zh86} and in the study of solitary waves for elementary particles \cite{gDeK, BDAFP00}. Furthermore, the $(2,4)$-Laplacian and its extensions appear as approximations of the Born–Infeld operator (see \cite{BInat, BI}).

It is worth noting that the presence of a nonlinear operator in the second equation of \eqref{maineq} introduces significant analytical challenges, beginning with the Poisson equation itself. Indeed, in the standard Laplacian case, the Lax-Milgram Theorem provides a representation formula for $\phi$. Conversely, when $m \neq 2$, an explicit expression for $\phi$ is generally unavailable. Nevertheless, the uniqueness of the nonnegative solution to the Poisson equation, as established in Proposition \ref{prop2.11} below, is sufficient to recover the key properties listed in Proposition \ref{331}.

Regarding the nonlinearity considered in the first equation of \eqref{maineq}, it can be viewed as a critical term perturbed by a subcritical one, both involving nontrivial weights. Nonlinearities of this type are of significant physical interest appearing, for instance, in the Yamabe problem or in the search for extremal functions for isoperimetric inequalities, and were first introduced by Brézis and Nirenberg \cite{BN} for the Laplacian case. Subsequently, Guedda and Véron \cite{GV89} extended several results from \cite{BN} to quasilinear equations driven by the $p$-Laplacian operator
$$-\Delta_p u=\lambda u^{p-1}+u^{p^*-1}\quad \text{in} \,\, \Omega,\qquad u>0 \quad \text{in} \,\, \Omega, \qquad u=0 \quad \text{on} \,\, \partial\Omega,$$
where $\Omega\subset \mathbb R^N$ is a bounded open set whose boundary is a $C^2$ submanifold of codimension $1$. In the same work, the authors established a general strong comparison principle for the $p$-Laplacian, extending the results of \cite{Tolk}, along with an extension of the Pohozaev identity \cite{PS}. On the other hand, the pure critical case in $\mathbb R^N$ was considered in  \cite{GV88}, where Guedda and Véron investigated positive radial solutions to $-\Delta_p u = u^{k-1}$ for $p < k \le p^*$, obtaining, among other things, that the best Sobolev constant for the embedding $W^{1,p}_0\hookrightarrow   L^{p^*}$ is achieved in $\mathbb R^N$, see \cite[Remark 3.1]{GV89}
and also providing a complete classification of isolated singularities in the subcritical case.

Moreover, in \cite{GV88T}, the same authors characterized the structure of the set of solutions for the eigenvalue problem with a potential in the one-dimensional case.   For Brézis-Nirenberg type problems involving the $p$-Laplacian in the entire Euclidean space, we refer to \cite{DH, SY}, while the non-homogeneous case of the $(p,q)$-Laplacian is addressed in \cite{BBF, BFccm} and in \cite{WS25} for normalized solutions.

In contrast, the study of Schr\"odinger-Poisson systems with critical nonlinearities appears to be a much more recent field of investigation. Key contributions include \cite{ZZ2009}, which addresses the case $p=q=2$ in $(\mathcal P_+)$, \cite{ZDS} for the case of fractional operators, and \cite{DSW2022} for the $p$-Laplacian operator. We also refer to \cite{LPNX} where multiplicity and concentration of normalized solutions of a double critical Schr\"odinger--Poisson system involving the fractional $p$-Laplacian is studied. More recently, the $(p,q)$-Laplacian was investigated in \cite{PLJ2024} looking for nodal solutions. Regarding $(\mathcal P_-)$, the nonlocal critical case was explored in \cite{TZ23}, while the zero-mass case for the $p$-Laplacian was studied in \cite{HS24}. Furthermore, for the $(p,q)$-Laplacian, we mention \cite{SHR}, where a fixed-point theory approach was employed to establish the existence of a positive solution to \eqref{maineq} (with $m=2, \vartheta=q$), highlighting the crucial role of the perturbation term in their argument.

A central difficulty in the variational study of classical Schr\"odinger-Poisson systems is ensuring the boundedness of Palais-Smale $(PS)$ sequences for the energy functional. Consequently, the majority of literature relies on specialized variational techniques that circumvent the need for a priori boundedness of all $(PS)$ sequences. These methods, introduced by Jeanjean \cite{J} and widely adopted in various contexts \cite{MZ, HIT}, typically involve constructing a specific bounded $(PS)$ sequence that satisfies an additional condition related to an auxiliary functional. Alternatively, Jeanjean and Le Coz \cite{JLC} developed another method based on an auxiliary truncated energy functional, which has been used in several settings \cite{LLS, AAP}.

One of the primary objectives of the present paper is to provide a comprehensive study of \eqref{maineq} for both sign cases. Unlike previous works described above, we employ a classical mountain pass approach. Although technically demanding, this allows us to identify the specific intervals where the standard mountain pass geometry is preserved and the boundedness, together with compactness, of the $(PS)$ sequences can be directly established.

While the Schr\"odinger-Poisson system is receiving increasing interest, the difficulties arising from the nonlinear nature of both equations in \eqref{maineq} mean that, to the best of our knowledge, this paper represents the first attempt to treat this type of problem variationally using a technique that differs from the standard ones. In this regard, we mention \cite{SHR}, where $\vartheta=q$ and $m=2$ in $(\mathcal P_-)$ with the addition of a perturbation term, and \cite{PLJ2024}, where nodal solutions to $(\mathcal P_+)$ were established for $\vartheta=q$ and $m=2$. Furthermore, in \cite{CLW}, quasilinear operators appear in both equations in $(\mathcal P_+)$, but the authors consider a subcritical nonlinearity. 
Notably, we investigate both signs of the nonlocal term, a feature that, outside of the Laplacian case \cite{vaira}, remains largely unexplored. Beyond the mathematical generality of the $(p,q)$-operator, our analysis is specifically designed to include the $q=2, p=4$ case, establishing a direct link to the physical foundations of Born-Infeld theory.

Our main result is the following
\begin{theorem}\label{th1}
Assume \eqref{K} and \eqref{W}. Then,
there exists $\, \beta^*>0$ such that
\begin{itemize}
\item  problem  $(\mathcal{P}_+)$ admits a nontrivial weak solution with positive energy for all $\beta>\beta^*$ and $\lambda>0$, provided 
that 
\begin{equation}\label{cond+}
\vartheta m'\le\tau<p^*\quad\mbox{and}\quad  \max\left\{1,\frac{q}{(m^*)'},\frac{p}{m'}\right\}<\vartheta<\frac{p^*}{m'};
\end{equation}
or 
\begin{equation}\label{cond++}
p<\tau<p^*\quad\mbox{and}\quad\max\left\{1,\dfrac{q}{(m^*)'}\right\}<\vartheta<\min\left\{\dfrac{p^*}{(m^*)'},\dfrac{p}{m'}\right\};
\end{equation}

\item problem  $(\mathcal{P}_-)$ admits a nontrivial weak solution with positive energy for all $\beta>\beta^*$ and $\lambda>0$ provided that  
\begin{equation}\label{cond-}
p<\tau<\min\{\vartheta m',p^*\}\quad\mbox{and}\quad\max\left\{1,\dfrac{q}{(m^*)'},\dfrac{p}{m'}\right\}<\vartheta<\dfrac{p^*}{(m^*)'}.
\end{equation}
\end{itemize}
\end{theorem}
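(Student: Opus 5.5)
The plan is a direct variational argument in $X=W^{1,p}(\mathbb R^3)\cap W^{1,q}(\mathbb R^3)$. For $u\in X$ let $\phi_u\in D^{1,m}(\mathbb R^3)$ be the unique nonnegative solution of $-\Delta_m\phi=|u|^\vartheta$ given by Proposition \ref{prop2.11}. The properties of Proposition \ref{331} apply to it: the identity $\|\nabla\phi_u\|_m^m=\int\phi_u|u|^\vartheta$, the bound $\int\phi_u|u|^\vartheta\le C\|u\|^{\vartheta m'}_{\vartheta(m^*)'}$ (this uses $\vartheta(m^*)'\in(q,p^*)$), continuity, and weak-to-strong behaviour along bounded sequences. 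We then work with the reduced functional
$$I_\pm(u)=\frac1p\|u\|_{1,p}^p+\frac1q\|u\|_{1,q}^q\pm\frac{\lambda}{\vartheta m'}\int\phi_u|u|^\vartheta-\frac\beta\tau\int\mathcal W|u|^\tau-\frac1{p^*}\int\mathcal K|u|^{p^*}.$$
The constant $\frac1{\vartheta m'}$ comes from $\frac1\vartheta-\frac{1}{\vartheta m}=\frac1{\vartheta m'}$. One checks that $I_\pm\in C^1$ and that critical points give weak solutions $(u,\phi_u)$. Positivity, or at least nontriviality, comes from testing with $u^-$ in the usual way.

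\emph{Mountain pass geometry.} Near $0$, the terms $\int\phi_u|u|^\vartheta\lesssim\|u\|^{\vartheta m'}$ with $\vartheta m'>p$, the $\tau$-term and the critical term are all of higher order than $\|u\|_{1,p}^p$. Here we use $\vartheta>p/m'$ (or, under \eqref{cond++}, simply drop the nonnegative term in $(\mathcal P_+)$), and we handle the $q$-part in the standard $(p,q)$ way with $\|u\|\le1$. This gives $I_\pm\ge\alpha>0$ on a small sphere. Next, fix $\varphi\in C_c^\infty(\Omega_\mathcal W)$, $\varphi\ge0$. For $(\mathcal P_-)$ the $\tau$-term, and the critical term when $\mathcal K\not\equiv0$ near the support, drive $I(t\varphi)\to-\infty$. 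For $(\mathcal P_+)$ under \eqref{cond+} we have $\tau\ge\vartheta m'$, and the $\tau$-term beats the nonlocal one for $\beta$ large; under \eqref{cond++} we have $\vartheta m'<p<\tau$. So in both cases there is $e$ with $I(e)<0$, and the mountain pass level $c_\beta>0$ exists.

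\emph{Boundedness of (PS) sequences.} For a $(PS)_c$ sequence we compute $I(u_n)-\frac1\tau I'(u_n)u_n$ and set $\Phi(u)=\int\phi_u|u|^\vartheta$. In $(\mathcal P_+)$ the nonlocal contribution is $\lambda\left(\frac1{\vartheta m'}-\frac1\tau\right)\Phi(u_n)$, which is $\ge0$ when $\tau\ge\vartheta m'$. In case \eqref{cond++} we instead use $\Phi\lesssim\|u\|^{\vartheta m'}$ with $\vartheta m'<p$, so this term is absorbed by $\left(\frac1p-\frac1\tau\right)\|u_n\|^p$. In $(\mathcal P_-)$ the sign flips, $-\lambda\left(\frac1{\vartheta m'}-\frac1\tau\right)\Phi$, and this is harmless precisely because $\tau<\vartheta m'$. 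The $q$-terms carry the coefficient $\frac1q-\frac1\tau>0$. The remaining terms are nonnegative since $\tau\le p^*$. This yields $\|u_n\|$ bounded in every case, which explains the three parameter regimes.

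\emph{Compactness and the energy threshold.} This is the main obstacle. Using the concentration-compactness principle of Lions on $|\nabla u_n|^p$ and $\mathcal K|u_n|^{p^*}$, including the mass at infinity, we show that any atom satisfies $\nu_j\ge S^{N/p}/\|\mathcal K\|_\infty^{(N-p)/p}$. An atom would force $c\ge\frac1N S^{3/p}\|\mathcal K\|_\infty^{-(3-p)/p}=:c^*$, after subtracting the nonnegative lower-order terms. The nonlocal term passes to the limit by Proposition \ref{331}, and the $\mathcal W$-term does so because $\mathcal W\in L^\eta$ gives compactness of $u\mapsto\int\mathcal W|u|^\tau$. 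Hence $c<c^*$ gives $u_n\to u$ strongly (via the $(S_+)$ property of $-\Delta_p-\Delta_q$), so $u$ is a critical point at level $c>0$. It remains to push $c_\beta$ below $c^*$, and rather than Talenti-bubble estimates I would use large $\beta$. Taking the fixed $\varphi$ above,
$$c_\beta\le\max_{t\ge0}\Big(\frac{t^p}p\|\varphi\|^p+\frac{t^q}q\|\varphi\|^q+\lambda C t^{\vartheta m'}-\frac{\beta t^\tau}{\tau}\int\mathcal W\varphi^\tau\Big),$$
and the maximizer $t_\beta$ tends to $0$ as $\beta\to\infty$. In the $(\mathcal P_-)$ case the nonlocal term is simply dropped. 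In the $(\mathcal P_+)$ case with $\vartheta m'\le\tau$, the maximizer must be treated carefully when $\vartheta m'=\tau$: we absorb the nonlocal term by requiring $\beta\int\mathcal W\varphi^\tau>\tau\lambda C$. Thus $c_\beta\to0$, so there is $\beta^*$ with $c_\beta<c^*$ for $\beta>\beta^*$. I expect the delicate points to be this uniformity of $\beta^*$ with respect to $\lambda$, which may require letting $\beta^*$ depend on $\lambda$ or exploiting $\vartheta m'>p$, and the handling of the nonlocal term inside the concentration-compactness step.
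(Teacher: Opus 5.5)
Your route is the paper's: reduced functional, $\mathcal J-\frac1\tau\mathcal J'$ for boundedness, Lions and Ben-Naoum--Troestler--Willem concentration compactness, and a fixed test function with $\beta\to\infty$. However, the compactness step fails as written in two places.

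First, under \eqref{cond+} in $(\mathcal P_+)$, the combination $\mathcal J_+(u_n)-\frac1p\mathcal J_+'(u_n)u_n$ contains $\lambda\bigl(\frac{1}{\vartheta m'}-\frac1p\bigr)\int\phi_{u_n}|u_n|^\vartheta$. This is negative because $\vartheta m'>p$, so the threshold $c^*=\frac13 S^{3/p}\|\mathcal K\|_\infty^{-(3-p)/p}$ does not follow by discarding nonnegative terms. The paper uses $\mathcal J-\frac{1}{\vartheta m'}\mathcal J'$ there instead. The nonlocal term then cancels, and the $\mathcal W$-term gets coefficient $\beta\bigl(\frac{1}{\vartheta m'}-\frac1\tau\bigr)\ge0$ precisely because $\tau\ge\vartheta m'$. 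This gives the smaller threshold $c_{PS}'$, which suffices since your mountain pass level tends to $0$.

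Second, Proposition~\ref{331}(iv) only gives convergence against a fixed test function. It does not give $\int\phi_{u_n}|u_n|^\vartheta\to\int\phi_u|u|^\vartheta$, and this fails along translations. Near an atom this is harmless (H\"older plus Rellich, since $\vartheta(m^*)'<p^*$), and in $(\mathcal P_+)$ the term has the right sign. In $(\mathcal P_-)$, however, the tail $\lambda\int\phi_{u_n}|u_n|^\vartheta\psi_R$ sits on the wrong side of the localized identity at infinity and need not vanish, so $\mu_\infty\le\|\mathcal K\|_\infty\nu_\infty$ is unavailable. This is a real obstruction, not a technicality. Take $\mathcal K,\mathcal W$ compactly supported. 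The functional obtained from $\mathcal J_-$ by dropping the $\mathcal W$- and $\mathcal K$-terms is translation invariant. Under \eqref{cond-} ($p<\vartheta m'$, $\vartheta(m^*)'<p^*$) it has a nontrivial critical point $w$ at a positive level that tends to $0$ as $\lambda\to\infty$; for $p=q=m=\vartheta=2$, $\tau=3$ it is the Choquard ground state, with level $e_0/\lambda$. Then $w(\cdot-y_n)$ with $|y_n|\to\infty$ is a (PS) sequence for $\mathcal J_-$ at that level with no convergent subsequence. So (PS)$_c$ for all $c<c^*$ is false for large $\lambda$. The paper's Lemma~\ref{pscomp} has the same defect: its argument at infinity only tracks the $\mathcal K$-term. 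For $(\mathcal P_-)$ you need two further ingredients. One is a splitting argument showing that mass escaping to infinity costs at least the least energy of the limit problem at infinity. The other is an estimate placing the mountain pass level strictly below that energy as well.

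Your worry about uniformity in $\lambda$ is justified, and $\vartheta m'>p$ will not cure it. For any fixed $u_0$, $\max_{t\ge0}\mathcal J_+(tu_0)\ge\mathcal J_+(u_0)\to\infty$ as $\lambda\to\infty$. A fixed test function can therefore only give $\beta^*=\beta^*(\lambda)$ in $(\mathcal P_+)$. In $(\mathcal P_-)$, dropping the nonlocal term gives a $\lambda$-free bound, as you say. The paper's proof of Lemma~\ref{c<csegnato} uses exactly such a fixed $u_0$ and only shows $t_{\lambda,\beta}\to0$ for each fixed $\lambda$. So it does not deliver a $\lambda$-independent $\beta^*$ for $(\mathcal P_+)$ either. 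With the first repair, your argument proves the $(\mathcal P_+)$ part with $\beta^*$ depending on $\lambda$; the $(\mathcal P_-)$ part needs the additional compactness ingredient above.
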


The proof of Theorem \ref{th1} employs variational methods, primarily utilizing the Mountain Pass Theorem. We begin by establishing the appropriate functional framework and observing that \eqref{maineq} can be reduced to a single equation involving a nonlocal term. This reduction is typically achieved through representation theorems, as seen in the semilinear case where the Laplacian operator is involved, see \cite{vaira}. See also the papers by Mitidieri et al. \cite{DMP1, MP1, CDM} where the authors consider semilinear equations, inequalities or systems associated to general classes of differential operators, such as the Kohn Laplacian on the Heisenberg group, differential inequalities on Carnot groups or polyharmonic systems.

The presence of the quasilinear $m$-Laplacian in \eqref{maineq} complicates the process. Specifically, it precludes an explicit expression for $\phi_u$ for a fixed $u$, unlike the standard Laplacian case. Nevertheless, we report existence, uniqueness, and other essential properties that allow the argument to proceed.

As a second step, the boundedness of any Palais-Smale sequences at every energy level $c$ is established. This represents the first significant challenge, as the behavior of the nonlocal term varies depending on the range of the parameter $\vartheta$. Consequently, several cases must be analyzed separately, while also accounting for different ranges of $\tau$. Next, we verify that the energy functional satisfies the Mountain Pass geometry under more relaxed assumptions on the parameters. Given the intrinsic lack of compactness in \eqref{maineq} due to the presence of the critical Sobolev exponent and the ambient space $\mathbb R^3$, we must restore it via the Palais-Smale condition by employing concentration-compactness principles: the analysis of concentration at points follows Lions \cite{L84}, while concentration at infinity is addressed by Ben-Naoum et al. \cite{BNTW}. The final and most delicate step in ensuring the existence of a critical point is to demonstrate that the Mountain Pass level lies within the range where the compactness condition is satisfied. It should be noted that, while the general conditions on $\tau$ and $\vartheta$ are stated in Theorem \ref{th1}, each preliminary lemma provides more refined requirements. In some cases, these conditions prove to be less restrictive than those in \eqref{cond+}, \eqref{cond++}, and \eqref{cond-}.

Although this strategy follows a classical variational framework, its implementation introduces numerous technical assumptions on the parameters $\tau$ and $\vartheta$ of the subcritical and nonlocal terms, respectively.

The paper is organized as follows. Section \ref{prel} provides an overview of standard concepts and results, such as properties for the Poisson equation, concentration compactness principles and the Mountain Pass Theorem. In Section \ref{var} we introduce the functional framework giving the foundation to implement the variational approach. Finally, Section \ref{proof}, starting from the properties of the energy functional and its behavior, is devoted to the rigorous proofs of the main result of the paper, Theorem \ref{th1}.

\section{Introductory tools}\label{prel}

In this section, we recall the main definitions and properties of Sobolev spaces together with classical results. Then, we give a brief outline of the regularity properties of the functionals associated with our main problems.

We begin by reviewing some notation. Since this paper focuses on the three-dimensional case, we shall maintain this assumption throughout the section, although results such as Sobolev embeddings or concentration compactness principles hold in higher dimensions.

We indicate with $B_r(x)$ the $\mathbb R^3$-ball of center $x\in\mathbb R^3$ and radius $r>0$, omitting $x$ when it is the origin.
Let $(X, \|\cdot\|)$ be a  Banach space, we indicate with $X'$ its dual, while $\langle \cdot, \cdot \rangle$ stands for the duality brackets for the pair $(X,X')$. Given two Banach spaces $X,Y$, the continuous embedding of $X$ into $Y$ is indicated by $X \hookrightarrow Y$; if the embedding is compact, we write $X \hookrightarrow \hookrightarrow  Y$. 

A sequence $(u_n)_n\in X$ strongly converges to $u$ when $u_n \to u$ in $X$. If the convergence is in weak sense then $u_n \rightharpoonup u$ in $X$.

Given any measurable set $\Omega \subset \mathbb R^3$ and $r \in [1,+\infty]$, $L^r(\Omega)$ stands for the standard Lebesgue space, whose norm will be indicated with $ \| \cdot \|_{L^r(\Omega)}$ or simply $\| \cdot \|_r$ when $\Omega=\mathbb R^3$. Moreover, we denote $r':=r/(r-1)$ as the conjugated index of $r$.

We will also make use of the Sobolev space $W^{1,r}(\mathbb R^3)$, which is the closure of smooth functions with compact support, i.e. $C^{\infty}_c(\mathbb R^3)$, with respect to the norm $$
\|u\|_{1,r}=\|u\|_r+\|\nabla u\|_r.
$$

The continuous embeddings of Sobolev spaces into Lebesgue spaces are clearly described in Sobolev-Gagliardo-Nirenberg Theorem stated below.

\begin{theorem}[\cite{Brezisaf}]\label{sgn}
Let $1 \le p <3$. Then $$
W^{1,p}(\mathbb R^3) \hookrightarrow L^{p^*}(\mathbb R^3), 
$$
where $p^*=\frac{3p}{3-p}$ is the critical Sobolev exponent. In particular,
$$W^{1,p}(\mathbb R^3) \hookrightarrow L^{s}(\mathbb R^3), \,\, \forall s \in [p,p^*].
$$
\end{theorem}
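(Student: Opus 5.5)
The plan is to prove the inequality $\|u\|_{p^*}\le C\|\nabla u\|_p$ first for $u\in C^\infty_c(\mathbb R^3)$, and then pass to $W^{1,p}(\mathbb R^3)$ by density. Since $W^{1,p}(\mathbb R^3)$ was defined above as the closure of $C^\infty_c(\mathbb R^3)$, the density step is immediate. The intermediate embeddings into $L^s$ will then follow by interpolation.

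\emph{Step 1: the case $p=1$, where $p^*=3/2$.} For $u\in C^\infty_c(\mathbb R^3)$ and each coordinate direction $i=1,2,3$, the fundamental theorem of calculus along the line through $x$ in direction $e_i$ gives
\[
|u(x)|\le \int_{\mathbb R}|\nabla u(x_1,\dots,t,\dots,x_3)|\,dt=:g_i(\hat x_i),
\]
where $t$ sits in the $i$-th slot and $\hat x_i$ denotes $x$ with the $i$-th coordinate removed. Hence
\[
|u(x)|^{3/2}\le \prod_{i=1}^3 g_i(\hat x_i)^{1/2}.
\]
I would then integrate successively in $x_1,x_2,x_3$, applying H\"older's inequality (Cauchy--Schwarz) at each stage; this is the Loomis--Whitney / Gagliardo argument. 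The result is
\[
\int|u|^{3/2}\le \prod_i \|g_i\|_{L^1(\mathbb R^2)}^{1/2}=\|\nabla u\|_1^{3/2},
\]
that is, $\|u\|_{3/2}\le \|\nabla u\|_1$. This bookkeeping of the iterated H\"older inequalities is the only genuinely delicate step; in dimension $3$ it amounts to two explicit applications of Cauchy--Schwarz, so I expect it to be manageable.

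\emph{Step 2: the case $1<p<3$.} Apply Step 1 to $v=|u|^{\gamma}$ with $\gamma=p^*(3-1)/3=2p/(3-p)>1$, so that $v\in C^1_c$. Then H\"older's inequality gives
\[
\Big(\int|u|^{3\gamma/2}\Big)^{2/3}\le \gamma\int|u|^{\gamma-1}|\nabla u|\le \gamma\,\|u\|_{(\gamma-1)p'}^{\gamma-1}\|\nabla u\|_p.
\]
The choice of $\gamma$ makes $3\gamma/2=(\gamma-1)p'=p^*$. Dividing by $\|u\|_{p^*}^{\gamma-1}$, which is finite because $u$ has compact support (the case $u\equiv0$ is trivial), yields
\[
\|u\|_{p^*}\le \gamma\,\|\nabla u\|_p.
\]
Next, for $u\in W^{1,p}(\mathbb R^3)$, take $u_n\in C^\infty_c$ with $u_n\to u$ in $\|\cdot\|_{1,p}$. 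The inequality shows that $(u_n)$ is Cauchy in $L^{p^*}$. Its $L^{p^*}$ limit coincides a.e.\ with $u$, since a subsequence converges a.e. This gives the continuous embedding $W^{1,p}(\mathbb R^3)\hookrightarrow L^{p^*}(\mathbb R^3)$.

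\emph{Step 3: intermediate exponents.} For $s\in[p,p^*]$, write $1/s=\alpha/p+(1-\alpha)/p^*$ with $\alpha\in[0,1]$. The interpolation (H\"older) inequality $\|u\|_s\le \|u\|_p^{\alpha}\|u\|_{p^*}^{1-\alpha}$, combined with Step 2 and Young's inequality, gives
\[
\|u\|_s\le C\|u\|_{1,p},
\]
which is the second embedding and completes the plan.
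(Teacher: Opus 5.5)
Your proposal is correct and follows the standard proof in the reference the paper cites (Brezis, Theorem 9.9 and Corollary 9.10); the paper itself gives no proof. The steps match that source: the Gagliardo--Loomis--Whitney estimate for $p=1$, the substitution $v=|u|^{\gamma}$ with $\gamma=2p/(3-p)$ for $1<p<3$, density of $C^\infty_c(\mathbb R^3)$, and interpolation for $s\in[p,p^*]$. Two cosmetic fixes: state Step 1 for $C^1_c$ functions, since you apply it to $|u|^{\gamma}$ and the argument is unchanged, and note that the iterated integration in dimension $3$ uses Cauchy--Schwarz three times (once per variable), not twice.
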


Since we are working with the $(p,q)$-Laplacian operator in \eqref{maineq}, then we will define $(X, \|\cdot\|)$ as
$$X=W^{1,p}(\mathbb R^3)\cap  W^{1,q}(\mathbb R^3),\qquad\qquad \|u\|:=\|u\|_{1,p}+\|u\|_{1,q}.$$
From Theorem \ref{sgn}, we have the following embedding
\begin{equation}\label{embX}
X\hookrightarrow L^s(\mathbb R^3), \,\, \forall s \in [q,p^*].
\end{equation}

In some cases we will need to use a larger space, so called homogeneous Sobolev space or Beppo Levi space, given by $D^{1,m}(\mathbb R^3)$ where $1<m<3$, defined 
as the closure of $C^\infty_c(\mathbb R^3)$ with respect to the norm
$$\|u\|_{D^{1,m}(\mathbb R^3)}:=\|\nabla u\|_m.$$
If $1<m<3$, then $D^{1,m}(\mathbb R^3)$ is reflexive, separable Banach space and we indicate with $D^{-1,m'}(\mathbb R^3)$ the dual of $D^{1,m}(\mathbb R^3)$. The continuous embedding of the $D^{1,m}(\mathbb R^3)$ is known as Sobolev Theorem, here reported for completeness.
\begin{theorem}[\cite{analisimoderna}] \label{immD12}  Let $1<m<3$. Then $$D^{1,m}(\mathbb R^3) \hookrightarrow L^{m^*}(\mathbb R^3),$$
and the best constant in the Sobolev inequality will be indicated with $S^{-1/m}$, where
\begin{equation}\label{S}
S= \inf_{u\in D^{1,m}(\mathbb R^3,\mathbb R)\setminus\{0\}} \frac{\|\nabla u\|_{{m}}^m}{\|u\|_{m^*}^m}
\end{equation}
\end{theorem}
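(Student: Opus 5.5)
The statement is the classical Sobolev inequality, so I would prove it in three steps. First the endpoint $m=1$ for $u\in C^\infty_c(\mathbb R^3)$. Then the case $1<m<3$, obtained by applying the endpoint to a suitable power of $|u|$. Finally a density argument extends the inequality from $C^\infty_c(\mathbb R^3)$ to its closure $D^{1,m}(\mathbb R^3)$. The positivity of $S$ in \eqref{S} is then just a restatement of the inequality $\|u\|_{m^*}\le C\|\nabla u\|_m$ with $C$ independent of $u$.

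\emph{Step 1 (the case $m=1$).} For $u\in C^\infty_c(\mathbb R^3)$ and each $i=1,2,3$, the fundamental theorem of calculus along the $x_i$-line gives
\[
|u(x)|\le \int_{\mathbb R}|\partial_i u(x_1,\dots,t,\dots,x_3)|\,dt=:g_i(\hat x_i),
\]
where $\hat x_i$ denotes $x$ with the $i$-th coordinate removed. Hence
\[
|u(x)|^{3/2}\le \prod_{i=1}^3 g_i(\hat x_i)^{1/2}.
\]
I would integrate this successively in $x_1$, $x_2$, $x_3$. At each stage one factor does not depend on the current variable and comes out of the integral. The remaining two factors are handled by the Cauchy--Schwarz inequality, which is the Loomis--Whitney/Gagliardo argument. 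The result is
\[
\|u\|_{3/2}^{3/2}\le \prod_{i=1}^3\|\partial_i u\|_1^{1/2},
\qquad\text{so}\qquad
\|u\|_{3/2}\le \|\nabla u\|_1 .
\]
This iterated Hölder bookkeeping is the step I expect to be the main obstacle, and I would write it out carefully for $N=3$ only.

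\emph{Step 2 (the case $1<m<3$).} Set $\gamma=\frac{2m}{3-m}>1$. This is chosen so that $\tfrac32\gamma=m^*$ and $(\gamma-1)m'=m^*$. Put $v=|u|^\gamma$; this function is $C^1_c$, and Step 1 applies to it by a standard approximation. Since $|\nabla v|=\gamma|u|^{\gamma-1}|\nabla u|$, Step 1 and Hölder's inequality with exponents $m',m$ give
\[
\|u\|_{m^*}^{\gamma}=\|v\|_{3/2}\le \gamma\int_{\mathbb R^3}|u|^{\gamma-1}|\nabla u|\,dx\le \gamma\,\|u\|_{m^*}^{\gamma-1}\|\nabla u\|_m .
\]
If $u\not\equiv0$, dividing by $\|u\|_{m^*}^{\gamma-1}<\infty$ yields
\[
\|u\|_{m^*}\le \gamma\|\nabla u\|_m .
\]
Consequently
\[
S\ge \gamma^{-m}>0
\]
on $C^\infty_c(\mathbb R^3)\setminus\{0\}$.

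\emph{Step 3 (density).} Let $u\in D^{1,m}(\mathbb R^3)$ and take $u_n\in C^\infty_c(\mathbb R^3)$ with $\nabla u_n\to\nabla u$ in $L^m$. By Step 2, $(u_n)$ is Cauchy in $L^{m^*}$, so it converges in $L^{m^*}$ to some $w$. Passing to a subsequence converging a.e., and identifying the distributional gradient of $w$ with the $L^m$-limit of $\nabla u_n$, we see that $w$ is the element $u$. Passing to the limit in $\|u_n\|_{m^*}\le\gamma\|\nabla u_n\|_m$ then gives the continuous embedding $D^{1,m}(\mathbb R^3)\hookrightarrow L^{m^*}(\mathbb R^3)$. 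Since the quotient in \eqref{S} is continuous along such approximating sequences, the infimum over $D^{1,m}(\mathbb R^3)\setminus\{0\}$ equals the infimum over $C^\infty_c(\mathbb R^3)\setminus\{0\}$, so $S>0$ and the best constant is $S^{-1/m}$.
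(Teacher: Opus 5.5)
The paper gives no proof of this statement: it is quoted from \cite{analisimoderna}, and the authors only use the resulting inequality $\|u\|_{m^*}\le S^{-1/m}\|\nabla u\|_m$ with $S>0$ (for instance in \eqref{disphi}). Your argument is the standard Gagliardo--Nirenberg proof, essentially as in \cite{Brezisaf}: Loomis--Whitney for $m=1$, then the power trick $v=|u|^\gamma$, then density. It is correct. The exponents check out: $\gamma=\frac{2m}{3-m}>1$, $\tfrac32\gamma=m^*$ and $(\gamma-1)m'=m^*$. The successive integration in $x_1,x_2,x_3$ you describe, with one factor pulled out and Cauchy--Schwarz on the other two at each stage, is exactly the $N=3$ case of Gagliardo's lemma.

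Three small points of precision. In Step 2 no approximation is needed. For $\gamma>1$ you have $|u|^\gamma\in C^1_c(\mathbb R^3)$, and Step 1 only uses the fundamental theorem of calculus, which holds for $C^1_c$ functions.

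In Step 3 the a.e.\ convergent subsequence is superfluous. What makes $D^{1,m}(\mathbb R^3)$ a subspace of $L^{m^*}(\mathbb R^3)$ is two facts. First, the limit $w$ does not depend on the approximating sequence: the difference of two such sequences tends to $0$ in $L^{m^*}$ by Step 2. Second, the map is injective: $\nabla w=\lim\nabla u_n$ in the sense of distributions, so $w=0$ forces the element to vanish in the norm $\|\nabla\cdot\|_m$.

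Finally, your argument yields only the inclusion of $D^{1,m}(\mathbb R^3)$ into $\{u\in L^{m^*}(\mathbb R^3): |\nabla u|\in L^m(\mathbb R^3)\}$. The equality the paper states right after the theorem also needs the reverse inclusion, which requires a separate truncation and mollification argument. That is not part of the statement you were asked to prove, so this is not a gap.
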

According to which, one has $$D^{1,m}(\mathbb R^3) = \left\{u\in L^{m^*}(\mathbb R^3): \, |\nabla u|\in L^m(\mathbb R^3)\right\}.$$ 

Now we will focus our attention on the Poisson equation in \eqref{maineq}, starting with an existence and uniqueness result and ending with properties of such solution, following \cite{DSW23}.

 \begin{proposition}\label{prop2.11}
Let $1<q<p<3$, $u\in X$ and $\vartheta$ satisfies \eqref{defthetamain}.
Then there is a unique nonnegative solution $\phi_u\in D^{1,m}(\mathbb R^3)$ to 
\begin{equation}\label{probphi}
-\Delta_m \phi =|u|^\vartheta\quad \text{in}\,\, \mathbb R^3. 
\end{equation}
\end{proposition}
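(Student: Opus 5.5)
The plan is to obtain $\phi_u$ by a Minty--Browder (or direct minimization) argument in the reflexive space $D^{1,m}(\mathbb R^3)$. The first step is to check that the right-hand side $|u|^\vartheta$ defines an element of the dual $D^{-1,m'}(\mathbb R^3)$. By Theorem \ref{immD12}, it is enough to show $|u|^\vartheta\in L^{(m^*)'}(\mathbb R^3)$, which amounts to $u\in L^{\vartheta(m^*)'}(\mathbb R^3)$. Condition \eqref{defthetamain} gives exactly $q<\vartheta(m^*)'<p^*$, so \eqref{embX} yields $u\in L^{\vartheta (m^*)'}$. Hölder's inequality and Theorem \ref{immD12} then give
\[
\Big|\int_{\mathbb R^3}|u|^\vartheta\varphi\,dx\Big|\le \|u\|_{\vartheta(m^*)'}^{\vartheta}\,\|\varphi\|_{m^*}\le S^{-1/m}\|u\|_{\vartheta(m^*)'}^{\vartheta}\|\nabla\varphi\|_m
\]
for every $\varphi\in D^{1,m}(\mathbb R^3)$. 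Hence $T_u(\varphi):=\int|u|^\vartheta\varphi$ is a bounded linear functional.

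Existence will come from minimizing
\[
J(\phi)=\frac1m\int_{\mathbb R^3}|\nabla\phi|^m\,dx-\int_{\mathbb R^3}|u|^\vartheta\phi\,dx
\]
on $D^{1,m}(\mathbb R^3)$. The bound above gives $J(\phi)\ge \frac1m\|\nabla\phi\|_m^m-C\|\nabla\phi\|_m$, so $J$ is coercive because $m>1$. It is also weakly lower semicontinuous: the first term is convex and continuous, and the second is a continuous linear functional. Since $D^{1,m}(\mathbb R^3)$ is reflexive, a global minimizer $\phi_u$ exists. As $J$ is $C^1$, its Euler--Lagrange equation is precisely the weak form of \eqref{probphi}. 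Alternatively, one can note that $-\Delta_m:D^{1,m}\to D^{-1,m'}$ is continuous, strictly monotone and coercive, and apply Minty--Browder directly.

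For nonnegativity, test the equation with $\phi_u^-=\min\{\phi_u,0\}$. This gives
\[
\int_{\{\phi_u<0\}}|\nabla\phi_u|^m\,dx=\int_{\mathbb R^3}|u|^\vartheta\phi_u^-\,dx\le 0,
\]
so $\nabla\phi_u^-=0$, and therefore $\phi_u^-=0$ because it lies in $L^{m^*}$. One must check that $\phi_u^-\in D^{1,m}$, which is standard via truncation of approximating $C^\infty_c$ functions.

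Uniqueness follows from strict monotonicity of $-\Delta_m$. If $\phi_1,\phi_2$ both solve \eqref{probphi}, subtract the weak formulations and test with $\phi_1-\phi_2$ to get
\[
\int_{\mathbb R^3}\big(|\nabla\phi_1|^{m-2}\nabla\phi_1-|\nabla\phi_2|^{m-2}\nabla\phi_2\big)\cdot(\nabla\phi_1-\nabla\phi_2)\,dx=0.
\]
The integrand is nonnegative and vanishes only where $\nabla\phi_1=\nabla\phi_2$, by the strict convexity of $\xi\mapsto|\xi|^m/m$. So $\nabla(\phi_1-\phi_2)=0$, and since $\phi_1-\phi_2\in L^{m^*}$ we get $\phi_1=\phi_2$. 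Uniqueness holds even among all solutions, not only nonnegative ones.

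The main obstacle is the first step: making sure the source term is genuinely in $D^{-1,m'}$. This is exactly where the lower and upper bounds in \eqref{defthetamain} enter, and the hypothesis $\vartheta>1$ is what makes $|u|^\vartheta$ behave well. The rest is routine monotone-operator theory.
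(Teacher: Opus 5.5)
Your argument is correct and matches what the paper relies on: the paper omits the proof and defers to Propositions 2.1--2.2 of \cite{DSW23} together with \eqref{embX}. The variational characterization in Proposition \ref{331}(i) is exactly your minimization of $\frac1m\int_{\mathbb R^3}|\nabla\phi|^m-\int_{\mathbb R^3}|u|^\vartheta\phi$, and \eqref{embX} enters precisely through $q<\vartheta(m^*)'<p^*$, as in your first step. One minor point: the bound $\vartheta>1$ plays no role in this proposition, since only $\vartheta(m^*)'\in[q,p^*]$ is used; it matters later, for the differentiability in $u$ of the nonlocal term.
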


We do not report its proof, as the one of the Proposition below, since they come straightforwardly from Propositions 2.1 and 2.2 in \cite{DSW23} by using \eqref{embX}. 

It is hard to give an explicit expression of $\phi_u$. We can prove the following properties of $\phi_u$ via the uniqueness of the solution of \eqref{probphi}.

\begin{proposition}\label{331}
For $u \in X$ and $\vartheta$ satisfy \eqref{defthetamain}, the solution $\phi_u$ of \eqref{probphi} given by Proposition \ref{prop2.11} has the following properties.
\begin{itemize}
    \item[(i)] It holds $\phi_u \geq 0$ and
    $$
    \int_{\mathbb R^3} \left( \frac{1}{m} |\nabla \phi_u|^m - |u|^\vartheta \phi_u \right) {\rm d} x = 
    \min_{\phi \in D^{1,m}(\mathbb R^3)} \int_{\mathbb R^3} \left( \frac{1}{m} |\nabla \phi|^m - |u|^\vartheta \phi \right) {\rm d} x.
    $$

    \item[(ii)] For $t > 0$, $\phi_{tu} = t^{\frac{\vartheta}{m-1}} \phi_u$ and 
    $\phi_{u_t}(x) = t^{\frac{k\vartheta - m}{m}} \phi_u(tx)$, where $u_t(x) = t^k u(tx)$. 
    Moreover $\phi_{u(\cdot + y)} = \phi_u(\cdot + y)$ for any $y \in \mathbb R^3$.

    \item[(iii)] It holds
    $$
    \|\phi_u\|_{D^{1,m}} \leq C \|u\|^{\frac{\vartheta}{m-1}}, \quad \text{where $C$ does not depend on } u.
    $$

    \item[(iv)] If $u_n  \rightharpoonup u$ in $X$, then $\phi_{u_n}  \rightharpoonup \phi_u$ in $D^{1,m}(\mathbb R^3)$ and
    $$
    \int_{\mathbb R^3} \phi_{u_n} |u_n|^{\vartheta-2} u_n \varphi {\rm d} x 
    \to 
    \int_{\mathbb R^3} \phi_u |u|^{\vartheta-2} u \varphi {\rm d} x, 
    \quad \forall \varphi \in X.
    $$

    \item[(v)] If $u_n \to u$ in $X$, then $\phi_{u_n} \to \phi_u$ in $D^{1,m}(\mathbb R^3)$.
\end{itemize}

\end{proposition}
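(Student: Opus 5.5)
The proof rests on two facts: the variational characterization of $\phi_u$ and the uniqueness of the nonnegative solution from Proposition \ref{prop2.11}. The energy $J_u(\phi)=\int(\frac1m|\nabla\phi|^m-|u|^\vartheta\phi)\,{\rm d}x$ is well defined on $D^{1,m}(\mathbb R^3)$. Indeed, by Theorem \ref{immD12} and Hölder's inequality with exponents $(m^*)'$ and $m^*$,
\[
\int|u|^\vartheta|\phi| \le \|u\|_{\vartheta(m^*)'}^\vartheta\|\phi\|_{m^*},
\]
and $\vartheta(m^*)'\in[q,p^*]$ by \eqref{defthetamain}. By \eqref{embX} this gives $\int|u|^\vartheta|\phi|\le C\|u\|^\vartheta\|\nabla\phi\|_m$.

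For (i), $J_u$ is strictly convex, continuous and coercive, since $m>1$ and the linear part is bounded as above. So it has a unique minimizer $\psi$, and $\psi$ solves \eqref{probphi}. Since $J_u(|\psi|)\le J_u(\psi)$, uniqueness of the minimizer gives $\psi=|\psi|\ge0$. Uniqueness in Proposition \ref{prop2.11} then gives $\psi=\phi_u$, which is (i).

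For (ii) we use scaling and uniqueness. The function $t^{\vartheta/(m-1)}\phi_u$ is nonnegative, lies in $D^{1,m}$, and satisfies $-\Delta_m(t^{\vartheta/(m-1)}\phi_u)=t^\vartheta|u|^\vartheta=|tu|^\vartheta$, so it equals $\phi_{tu}$. For $u_t(x)=t^ku(tx)$, set $\psi(x)=t^{a}\phi_u(tx)$. Then $-\Delta_m\psi(x)=t^{a(m-1)+m}|u|^\vartheta(tx)$. Matching this with $|u_t|^\vartheta=t^{k\vartheta}|u(tx)|^\vartheta$ forces $a=(k\vartheta-m)/(m-1)$. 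The exponent written in the statement may be a typo for this, and I would check the computation and state the correct exponent. Translation invariance follows in the same way.

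For (iii), test \eqref{probphi} with $\phi_u$ and apply the Hölder/Sobolev bound above:
\[
\|\nabla\phi_u\|_m^m=\int|u|^\vartheta\phi_u\le C\|u\|^\vartheta\|\nabla\phi_u\|_m,
\]
hence $\|\phi_u\|_{D^{1,m}}\le C\|u\|^{\vartheta/(m-1)}$.

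For (iv), a weakly convergent $(u_n)$ is bounded, so $(\phi_{u_n})$ is bounded in $D^{1,m}$ by (iii). A subsequence then converges weakly to some $\phi\ge0$, and also a.e. and in $L^r_{loc}$. The main obstacle is passing to the limit in $\int|\nabla\phi_{u_n}|^{m-2}\nabla\phi_{u_n}\cdot\nabla\varphi$, because the $m$-Laplacian is nonlinear. I would handle it with Minty's trick, using the monotonicity of $-\Delta_m$. The key ingredient is that $|u_n|^\vartheta\to|u|^\vartheta$ in $D^{-1,m'}$, or at least weakly. Weak convergence holds because $|u_n|^\vartheta$ is bounded in $L^{(m^*)'}$ and converges a.e. by local compactness of $X\hookrightarrow L^s_{loc}$ for $s<p^*$. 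Since $\vartheta(m^*)'<p^*$, I expect a strong local version to be enough to pass to the limit in the Minty inequality. That limit identifies $\phi$ as a nonnegative solution with datum $|u|^\vartheta$, so $\phi=\phi_u$, and the whole sequence converges by uniqueness.

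For the second claim in (iv), fix $\varphi\in X$ and write
\[
\phi_{u_n}|u_n|^{\vartheta-2}u_n\varphi-\phi_u|u|^{\vartheta-2}u\varphi.
\]
Split into a large ball, where the Rellich compactness of $u_n$ and the weak convergence of $\phi_{u_n}$ in $L^{m^*}$ combine, and its complement, where a tail estimate is made uniformly small through Hölder's inequality and the fact that $\varphi\in L^{\vartheta(m^*)'}$.

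For (v), strong convergence $u_n\to u$ gives $|u_n|^\vartheta\to|u|^\vartheta$ in $L^{(m^*)'}$, hence in $D^{-1,m'}$. Subtract the two equations and test with $\phi_{u_n}-\phi_u$. For $m\ge2$, the standard inequality $(|a|^{m-2}a-|b|^{m-2}b)\cdot(a-b)\ge c|a-b|^m$ gives convergence in $D^{1,m}$ directly. For $1<m<2$, the analogous inequality combined with Hölder's inequality and the bound (iii) gives the same conclusion.
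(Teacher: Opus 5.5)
The paper does not actually prove this proposition: it defers to Propositions 2.1--2.2 of \cite{DSW23}, and the only new input is the embedding \eqref{embX}, which you use in the same way, since \eqref{defthetamain} gives $\vartheta(m^*)'\in(q,p^*)$. Several parts of your reconstruction are correct as written: strictly convex minimization for (i), scaling plus uniqueness for (ii), testing with $\phi_u$ for (iii), Simon-type inequalities for (v), and the ball/tail splitting for the second claim of (iv). You are also right about (ii). The computation gives $\phi_{u_t}(x)=t^{\frac{k\vartheta-m}{m-1}}\phi_u(tx)$. For $m=\vartheta=2$ the Newtonian potential yields $t^{2k-2}\phi_u(tx)$, so the exponent $\frac{k\vartheta-m}{m}$ in the statement is a misprint.

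The step that does not close as written is the identification $\phi=\phi_u$ in (iv) via a global Minty argument. To pass to the limit in $0\le\langle -\Delta_m v+\Delta_m\phi_{u_n},v-\phi_{u_n}\rangle$ you need $\limsup_n\int_{\mathbb R^3}|u_n|^\vartheta\phi_{u_n}\,{\rm d}x\le\int_{\mathbb R^3}|u|^\vartheta\phi\,{\rm d}x$. Neither weak convergence of $|u_n|^\vartheta$ nor local strong convergence gives this on $\mathbb R^3$. Take $u_n=w(\cdot-y_n)$ with $|y_n|\to\infty$. Then $u_n\rightharpoonup0$ and $\phi_{u_n}=\phi_w(\cdot-y_n)\rightharpoonup0$, yet $\int|u_n|^\vartheta\phi_{u_n}\,{\rm d}x=\int|w|^\vartheta\phi_w\,{\rm d}x>0$. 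The same example shows that $|u_n|^\vartheta$ need not converge strongly in $D^{-1,m'}$.

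The missing idea is a cutoff. For $\eta\in C_c^\infty(\mathbb R^3)$ with $0\le\eta\le1$, test the equation for $\phi_{u_n}$ with $\eta(\phi_{u_n}-\phi)$. Two facts hold locally:
\begin{itemize}
\item $|u_n|^\vartheta\eta\to|u|^\vartheta\eta$ strongly in $L^{(m^*)'}$, because $\vartheta(m^*)'<p^*$;
\item $\phi_{u_n}\to\phi$ in $L^m_{\rm loc}$ while $|\nabla\phi_{u_n}|^{m-2}\nabla\phi_{u_n}$ stays bounded in $L^{m'}$.
\end{itemize}
Together they give
\[
\int_{\mathbb R^3}\eta\,\bigl(|\nabla\phi_{u_n}|^{m-2}\nabla\phi_{u_n}-|\nabla\phi|^{m-2}\nabla\phi\bigr)\cdot\nabla(\phi_{u_n}-\phi)\,{\rm d}x\to0 .
\]
By the Simon inequalities, $\nabla\phi_{u_n}\to\nabla\phi$ in $L^m_{\rm loc}$ and a.e. along a subsequence. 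Hence $|\nabla\phi_{u_n}|^{m-2}\nabla\phi_{u_n}\rightharpoonup|\nabla\phi|^{m-2}\nabla\phi$ in $L^{m'}$, and you can pass to the limit in the weak formulation. Uniqueness then gives $\phi=\phi_u$ and convergence of the whole sequence, as you planned. Equivalently, you can keep your Minty approach but weight the inequality by $\eta$, and handle the diagonal term using the equation tested with $\eta\phi_{u_n}$.
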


To address the lack of compactness proper of problems with a critical nonlinearity in the entire Euclidean space $\mathbb R^3$, we introduce the following two lemmas. These tools are specifically designed to handle concentration of compactness at points and at infinity, respectively.
\begin{lemma}[{\cite[Lemma I.1]{L3}}]
\label{lions}
Let $1\leq p<3$. Suppose $(u_n) \subseteq D^{1,p}(\mathbb R^3)$ to be such that $u_n \rightharpoonup u$ in $D^{1,p}(\mathbb R^3)$, and both $|\nabla u_n|^p \rightharpoonup \mu$, $|u_n|^{p^*} \stackrel{*}{\rightharpoonup} \nu$ in the sense of measures, for some $u\in D^{1,p}(\mathbb R^3)$ and $\mu,\nu$ bounded non-negative measures on $\mathbb R^3$. Then there exist some at most countable set $J$, a family of distinct points $(x_j)_{j\in J}\subseteq \mathbb R^3$, and two families of numbers $(\nu_j)_{j\in J}, (\mu_j)_{j\in J}\subseteq (0,+\infty)$ fulfilling
\begin{equation}
\label{ineqmeasures}
\nu=|u|^{p^*}+\sum_{j\in J} \nu_j \delta_{x_j},\quad
\mu\geq |\nabla u|^p+\sum_{j\in J} \mu_j \delta_{x_j},\quad
S\nu_j^{p/p^*} \leq \mu_j \quad  j\in J,
\end{equation}
where $S$ is the best constant in the Sobolev inequality defined in \eqref{S} with $m=p$.
\end{lemma}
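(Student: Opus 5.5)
This is Lions' first concentration-compactness lemma, so I would follow the classical argument. The plan is to prove it first for the "defect" sequence $v_n=u_n-u$, and then transfer the conclusions to $u_n$ via Brezis--Lieb.

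\textbf{Reduction to $u=0$.} Set $v_n=u_n-u\rightharpoonup 0$ in $D^{1,p}(\mathbb R^3)$. By Rellich on bounded sets, $u_n\to u$ in $L^p_{\rm loc}$; after a subsequence, also $u_n\to u$ a.e. Since $\|u_n\|_{p^*}$ is bounded, the Brezis--Lieb lemma gives, for every $\varphi\in C_c(\mathbb R^3)$,
$$\int|\varphi||u_n|^{p^*}-\int|\varphi||v_n|^{p^*}\to\int|\varphi||u|^{p^*}.$$
Hence $|v_n|^{p^*}\stackrel{*}{\rightharpoonup}\bar\nu:=\nu-|u|^{p^*}$. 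Passing to a subsequence, $|\nabla v_n|^p\rightharpoonup\bar\mu$ for some bounded measure $\bar\mu$.

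\textbf{Reverse H\"older inequality.} For $\varphi\in C_c^\infty$, apply \eqref{S} (with $m=p$) to $\varphi v_n$:
$$S^{1/p}\Big(\int|\varphi|^{p^*}|v_n|^{p^*}\Big)^{1/p^*}\le\|\nabla(\varphi v_n)\|_p\le\Big(\int|\varphi|^p|\nabla v_n|^p\Big)^{1/p}+\|v_n\nabla\varphi\|_p.$$
The last term tends to $0$ by local compactness, so in the limit
$$S^{1/p}\Big(\int|\varphi|^{p^*}d\bar\nu\Big)^{1/p^*}\le\Big(\int|\varphi|^pd\bar\mu\Big)^{1/p}.$$
This is the key step. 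Since $p^*>p$, a standard measure-theoretic argument turns it into an atomic decomposition. Taking $\varphi$ approximating $\chi_E$ gives $S\,\bar\nu(E)^{p/p^*}\le\bar\mu(E)$, so $\bar\nu\ll\bar\mu$. Write $\bar\nu=f\bar\mu$ by Radon--Nikodym. At any non-atomic point of $\bar\mu$, take balls shrinking to it: $\bar\nu(B)\le C\bar\mu(B)^{p^*/p}$, so $\bar\nu(B)/\bar\mu(B)\to0$ and $f=0$ $\bar\mu$-a.e. off the atoms. Since $\bar\mu$ is bounded, its atoms $\{x_j\}$ form an at most countable set. Therefore $\bar\nu=\sum\nu_j\delta_{x_j}$, discarding the $j$ with $\nu_j=0$. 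This yields the first identity in \eqref{ineqmeasures}.

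\textbf{Inequality for $\mu$.} Testing the reverse H\"older inequality with $\varphi\to\chi_{\{x_j\}}$ gives $S\nu_j^{p/p^*}\le\bar\mu(\{x_j\})$. It remains to compare $\mu$ with $\bar\mu$ and $|\nabla u|^p$. Two facts are needed.
\begin{itemize}
\item Weak lower semicontinuity of $\int\varphi|\nabla u_n|^p$ for $\varphi\ge0$ gives $\mu\ge|\nabla u|^p$.
\item At atoms, for a cutoff $\varphi_\varepsilon$ concentrating at $x_j$, we have $\|\nabla u_n\|_{L^p(\varphi_\varepsilon)}\ge\|\nabla v_n\|_{L^p(\varphi_\varepsilon)}-\|\nabla u\|_{L^p(\varphi_\varepsilon)}$. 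The last term vanishes as $\varepsilon\to0$ because $|\nabla u|^p\in L^1$. Hence $\mu(\{x_j\})\ge\bar\mu(\{x_j\})=:\mu_j\ge S\nu_j^{p/p^*}$.
\end{itemize}
Because $|\nabla u|^p\,dx$ has no atoms and is mutually singular with the Dirac masses, combining the two facts gives $\mu\ge|\nabla u|^p+\sum\mu_j\delta_{x_j}$.

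Finally, $\mu_j>0$ holds because $\nu_j>0$. The main obstacle is the measure-theoretic step that turns the reverse H\"older inequality into atomicity of $\bar\nu$; I would handle it by the Radon--Nikodym and density argument at non-atoms described above.
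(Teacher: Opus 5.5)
Your proposal is correct and is the classical argument of Lions. You reduce to $v_n=u_n-u$ via the Brezis--Lieb lemma, derive the reverse H\"older inequality $S^{1/p}\bigl(\int|\varphi|^{p^*}d\bar\nu\bigr)^{1/p^*}\le\bigl(\int|\varphi|^p d\bar\mu\bigr)^{1/p}$, use $p^*>p$ together with differentiation of measures to show that $\bar\nu$ is purely atomic, and then compare $\mu$ with $|\nabla u|^p$ and with $\bar\mu$ at the atoms. The paper gives no proof of its own and simply invokes \cite[Lemma I.1]{L3}, whose proof is essentially the one you outline.
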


Note that Lemma \ref{lions} requires the tight convergence of the measures involving the critical Sobolev exponent, but the proof of this condition reveals to be rather difficult and technical. Thus, Ben-Naoum et al. established a version of the Lemma \ref{lions} known as {\it escape to infinity principle}, where the concentration at infinity is enclosed in the parameters $\nu_\infty$ and $\mu_\infty$.

\begin{lemma}[{\cite[Lemma 3.3]{BNTW}}]
\label{bennaoum}
Let $1\leq p<3$. Suppose that $(u_n) \subseteq D^{1,p}(\mathbb R^3)$ is bounded and define
\begin{equation}\label{munuinf}\begin{aligned}
&\nu_\infty := \lim_{R\to+\infty} \limsup_{n\to\infty} \int_{B_R^c} |u_n|^{p^*} {\rm d} x,\\& \mu_\infty := \lim_{R\to+\infty} \limsup_{n\to\infty} \int_{B_R^c} |\nabla u_n|^p {\rm d} x.\end{aligned}\end{equation}
Then, it holds $S\nu_\infty^{p/p^*} \leq \mu_\infty$ and
\begin{equation}\label{nunuinfty}\begin{aligned}
&\limsup_{n\to\infty} \int_{\mathbb R^3} |u_n|^{p^*} {\rm d} x = \int_{\mathbb R^3} \, {\rm d}\nu + \nu_\infty, 
\\&\limsup_{n\to\infty} \int_{\mathbb R^3} |\nabla u_n|^p {\rm d} x = \int_{\mathbb R^3} \, {\rm d}\mu + \mu_\infty, 
\end{aligned}\end{equation}
where $\nu,\mu$ are as in Lemma \ref{lions}.
\end{lemma}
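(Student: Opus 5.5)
This is Lemma 3.3 of Ben-Naoum, Troestler and Willem. We prove it by a cutoff argument at infinity. Fix a smooth $\psi_R$ with $0\le\psi_R\le1$, $\psi_R=0$ on $B_R$, $\psi_R=1$ on $B_{R+1}^c$ and $|\nabla\psi_R|\le 2$. Passing to a subsequence, we may assume $|\nabla u_n|^p\rightharpoonup\mu$ and $|u_n|^{p^*}\stackrel{*}{\rightharpoonup}\nu$ weakly in the sense of measures, and $u_n\rightharpoonup u$ in $D^{1,p}(\mathbb R^3)$.

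\textbf{Step 1: the inequality $S\nu_\infty^{p/p^*}\le\mu_\infty$.} Apply the Sobolev inequality \eqref{S} (with $m=p$) to $\psi_Ru_n$:
$$S^{1/p}\|\psi_Ru_n\|_{p^*}\le \|\psi_R\nabla u_n\|_p+\|u_n\nabla\psi_R\|_p .$$
The last term lives on the annulus $A_R=B_{R+1}\setminus B_R$. By Rellich's theorem, $u_n\to u$ in $L^p_{loc}$, so $\limsup_n\|u_n\nabla\psi_R\|_p\le 2\|u\|_{L^p(A_R)}$. Since $u\in L^{p^*}$, Hölder gives $\|u\|_{L^p(A_R)}\le|A_R|^{1/3}\|u\|_{L^{p^*}(A_R)}$ with $|A_R|\sim R^2$. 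This product need not vanish, so we replace the fixed-width annulus by a wide one: take $\psi_R=1$ on $B_{2R}^c$ with $|\nabla\psi_R|\le 2/R$. Hölder then gives a bound $C\|u\|_{L^{p^*}(B_{2R}\setminus B_R)}\to0$. Next, comparing $\int\psi_R^{p^*}|u_n|^{p^*}$ with $\int_{B_{2R}^c}|u_n|^{p^*}$, and $\int\psi_R^p|\nabla u_n|^p$ with $\int_{B_R^c}|\nabla u_n|^p$, and using the monotonicity in $R$ of the quantities in \eqref{munuinf}, we take $\limsup_n$ and then $R\to\infty$. This yields $S^{1/p}\nu_\infty^{1/p^*}\le\mu_\infty^{1/p}$.

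\textbf{Step 2: the splitting \eqref{nunuinfty}.} Write $\int|u_n|^{p^*}=\int\psi_R|u_n|^{p^*}+\int(1-\psi_R)|u_n|^{p^*}$. The second piece has a compactly supported continuous weight, so weak-$*$ convergence gives $\int(1-\psi_R)|u_n|^{p^*}\to\int(1-\psi_R)\,d\nu$. As $R\to\infty$, dominated convergence (since $\nu$ is bounded) turns this into $\int d\nu$. For the first piece, $\limsup_n\int\psi_R|u_n|^{p^*}$ is squeezed between $\limsup_n\int_{B_{2R}^c}$ and $\limsup_n\int_{B_R^c}$, and both tend to $\nu_\infty$. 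Hence $\limsup_n\int|u_n|^{p^*}=\int d\nu+\nu_\infty$. The identical argument, with $\psi_R|\nabla u_n|^p$ and $\mu$, gives the gradient identity.

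\textbf{Main obstacle.} The delicate point is the cross term $u_n\nabla\psi_R$ in Step 1. It must vanish after taking $\limsup_n$ and then $R\to\infty$. This requires both the local compactness of $D^{1,p}\hookrightarrow L^p_{loc}$ and a cutoff whose gradient decays like $1/R$, so that Hölder on the annulus controls it by a tail of $\|u\|_{p^*}$.
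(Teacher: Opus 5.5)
The paper gives no proof of this lemma; it quotes it from \cite{BNTW}. Your argument is correct. It differs from the standard proof in \cite{BNTW} mainly in how the cross term is disposed of.

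In \cite{BNTW} one applies the Sobolev inequality to $\psi_R(u_n-u)$ with a unit-width cutoff. Since $u_n-u\to0$ in $L^p_{\mathrm{loc}}$, the term $\|(u_n-u)\nabla\psi_R\|_p$ vanishes as $n\to\infty$ for every fixed $R$. The tail quantities of $u_n$ and $u_n-u$ coincide because the tails of $|u|^{p^*}$ and $|\nabla u|^p$ vanish. In that formulation the measures are the defect measures of $u_n-u$, and a Brezis--Lieb splitting separates off $u$.

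You instead keep $u_n$ and use the scaled cutoff $\psi(\cdot/R)$. Then $|\nabla\psi_R|\lesssim R^{-1}$, and H\"older with $1/p-1/p^*=1/3$ gives $|B_{2R}\setminus B_R|^{1/3}R^{-1}=O(1)$. The cross term is therefore bounded by $C\|u\|_{L^{p^*}(B_{2R}\setminus B_R)}\to0$. Your remark that a fixed-width annulus does not suffice for $u\in D^{1,p}$ is also correct.

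This scale-invariant bookkeeping matches the paper's formulation, where $\mu,\nu$ are the limits of $|\nabla u_n|^p$ and $|u_n|^{p^*}$ themselves, so $\nu$ already contains $|u|^{p^*}$. It also makes your Step 2 a purely measure-theoretic splitting with no Brezis--Lieb argument. The \cite{BNTW} version, in exchange, yields the finer decomposition into $u$, defect measures and mass at infinity.

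One small correction: do not pass to a subsequence at the start. The $\limsup$'s defining $\nu_\infty,\mu_\infty$ can decrease along subsequences, so an inequality proved for a subsequence does not transfer back. The statement presupposes the convergences of Lemma \ref{lions} for the sequence itself. With those, $u_n\to u$ in $L^p_{\mathrm{loc}}$ holds along the whole sequence, and your argument applies verbatim.
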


We will use Lemmas \ref{lions}--\ref{bennaoum} to avoid concentration both at points, i.e. $\nu_j=\mu_j=0$ for all $j\in J$, and at
infinity, i.e. $\nu_\infty=\mu_\infty=0$.

\bigskip

We end the present section by introducing the following version of the Mountain Pass Theorem.

\begin{theorem}[{\cite{AR}}]\label{mptheorem}
Let $(V,\| .\|_V)$ be a Banach space and consider $F\in C^1(V)$. We assume that
\par (i) $F(0)=0$,
\par (ii) There exist $\alpha,R>0$ such that $F(u)\geq\alpha$ for all $u\in V$, with $\|u\|_V=R$,
\par (iii) There exists $v_0\in V$ such that $\limsup_{t\to \infty}F(tv_0)<0$.
\par\noindent
Let $t_0>0$ be such that $\|t_0v_0\|_V>R$ and $F(t_0v_0)<0$
and let
$$c:=\inf_{\gamma\in\Gamma}\,\sup_{t\in [0,1]}F(\gamma(t)),$$
where
$$\Gamma:=\{\gamma\in C^0([0,1],V) :  \gamma(0)=0\hbox{ and }\gamma(1)=t_0v_0\}.$$
Then, there exists a Palais-Smale sequence at level $c$, that is
 a sequence $(u_n)_n\subset V$ such that
$$\lim_{n\to \infty}F(u_n)=c\quad\hbox{ and }\quad\lim_{n\to \infty}F'(u_n)= 0\quad\hbox{strongly in }V'.$$
\end{theorem}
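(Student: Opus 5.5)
The plan is the classical deformation argument for the Ambrosetti--Rabinowitz theorem. A Palais--Smale sequence at level $c$ is obtained directly from the failure of a deformation, with no compactness assumed.

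\emph{Step 1: the level $c$ is finite and $c\ge\alpha$.} The segment $t\mapsto t\,t_0v_0$ belongs to $\Gamma$, so $\Gamma\neq\emptyset$ and $c<+\infty$. For any $\gamma\in\Gamma$, the function $t\mapsto\|\gamma(t)\|_V$ is continuous, equals $0$ at $t=0$, and exceeds $R$ at $t=1$. By the intermediate value theorem there is $t^\ast$ with $\|\gamma(t^\ast)\|_V=R$, and then (ii) gives $\sup_{[0,1]}F\circ\gamma\ge F(\gamma(t^\ast))\ge\alpha$. Hence $c\ge\alpha>0$. Since $F(0)=0$ and $F(t_0v_0)<0$, both endpoint values lie strictly below $c$.

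\emph{Step 2: reduce to a deformation statement.} Suppose, for contradiction, that no Palais--Smale sequence at level $c$ exists. Then there are $\bar\varepsilon>0$ and $\delta>0$ such that $\|F'(u)\|_{V'}\ge\delta$ whenever $|F(u)-c|\le 2\bar\varepsilon$. Otherwise we could pick $u_n$ with $|F(u_n)-c|\le 1/n$ and $\|F'(u_n)\|_{V'}\le1/n$, which is exactly a Palais--Smale sequence. Shrinking $\bar\varepsilon$, I take $2\bar\varepsilon<\min\{c,\,c-F(t_0v_0)\}$, so that $0$ and $t_0v_0$ lie outside the strip $F^{-1}([c-2\bar\varepsilon,c+2\bar\varepsilon])$.

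\emph{Step 3: quantitative deformation lemma.} I would prove the following. For $0<\varepsilon\le\min\{\bar\varepsilon,\delta^2/8\}$ there is a continuous $\eta:V\to V$ such that:
\begin{itemize}
\item[(a)] $\eta(u)=u$ whenever $|F(u)-c|\ge2\varepsilon$;
\item[(b)] $F(\eta(u))\le F(u)$ for all $u\in V$;
\item[(c)] $\eta(\{F\le c+\varepsilon\})\subset\{F\le c-\varepsilon\}$.
\end{itemize}
Since $F$ is only $C^1$ on a Banach space, $F'$ cannot be integrated directly. Instead I would build a locally Lipschitz pseudo-gradient field $g$ on $\tilde V=\{u: F'(u)\neq0\}$ satisfying $\|g(u)\|\le2\|F'(u)\|$ and $\langle F'(u),g(u)\rangle\ge\|F'(u)\|^2$. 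For each $u$, choose $w_u$ with $\|w_u\|=1$ and $\langle F'(u),w_u\rangle>\tfrac23\|F'(u)\|$, and set $v_u:=\tfrac32\|F'(u)\|\,w_u$; by continuity of $F'$ these inequalities persist on a neighbourhood of $u$. Gluing the $v_u$ with a locally Lipschitz partition of unity, which exists because metric spaces are paracompact, gives $g$.

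Next fix a locally Lipschitz cutoff $\chi$ with $\chi=1$ on $\{|F-c|\le\varepsilon\}$, $\chi=0$ on $\{|F-c|\ge2\varepsilon\}$, and $0\le\chi\le1$. Solve the Cauchy problem $\dot\sigma=-\chi(\sigma)\,g(\sigma)/\|g(\sigma)\|^2$ with $\sigma(0)=u$. The vector field is bounded by $1/\delta$ on its support, so the flow is global, and I set $\eta=\sigma(2\varepsilon,\cdot)$. Along the flow $\tfrac{d}{ds}F(\sigma)\le-\chi(\sigma)/4$. Property (b) follows, and on the region where $\chi=1$ the energy drops by at least $\varepsilon/2$ per unit time. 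Running long enough, or adjusting constants such as the time horizon, gives the drop $2\varepsilon$ needed for (c). Property (a) holds because $\chi=0$ there.

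\emph{Step 4: contradiction.} Pick $\gamma\in\Gamma$ with $\sup_{[0,1]}F\circ\gamma\le c+\varepsilon$. By the choice of $\bar\varepsilon$ and (a), the map $\eta$ fixes $0$ and $t_0v_0$, so $\eta\circ\gamma\in\Gamma$. By (c), $\sup F\circ\eta\circ\gamma\le c-\varepsilon<c$, contradicting the definition of $c$.

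The main obstacle is Step 3, specifically the construction of the locally Lipschitz pseudo-gradient field in a general Banach space and the bookkeeping of $\varepsilon$ versus $\delta$ so that the flow crosses the strip. If that becomes cumbersome, my fallback is Ekeland's variational principle applied to $\Phi(\gamma)=\max F\circ\gamma$ on the complete metric space $\Gamma$ with the sup distance, extracting near-critical points from almost-minimizing paths.
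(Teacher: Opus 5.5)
Your proposal is correct and is the standard pseudo-gradient deformation argument behind the cited result; the paper gives no proof of its own, only the reference to Ambrosetti--Rabinowitz, and the Palais--Smale-sequence version you prove is the one in Willem's \emph{Minimax theorems}. The one slip is in Step 3: where $\chi=1$, $F$ decreases along your flow at rate at least $1/4$ per unit time, not $\varepsilon/2$. You should therefore take $\eta=\sigma(8\varepsilon,\cdot)$ to obtain the drop $2\varepsilon$ required in (c), and (a) and (b) are unaffected by this choice.
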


\section{The variational approach}\label{var}

Now we establish the variational framework of \eqref{maineq}.

By  Proposition \ref{prop2.11}, for any $u\in X$, there is a unique nonnegative solution $\phi_u\in D^{1,m}(\mathbb R^3)$ to \eqref{probphi}. Thus the map $\mathcal{P}hi:X \to D^{1,m}(\mathbb R^3)$ defined as $\mathcal{P}hi(u)=\phi_u,$
is well-defined from the above argument. 
Now we define the functional
$\mathcal{F}: X \times D^{1,m}(\mathbb R^3) \to \mathbb R$ as
$$\begin{aligned}
\mathcal{F}(u, \phi):= \frac{1}{p}\|u\|_{1,p}^p+\frac{1}{q}\|u\|_{1,q}^q&\pm \frac{\lambda}{\vartheta}\int_{\mathbb R^3} \phi |u|^{\vartheta}{\rm d} x\mp\frac{\lambda}{m \vartheta}\int_{\mathbb R^3} |\nabla \phi|^{m}{\rm d} x\\
&-\frac{\beta}{\tau}\int_{\mathbb R^3} \mathcal{W}(x)|u|^{\tau}{\rm d} x-\frac{1}{p^*}\int_{\mathbb R^3} \mathcal{K}(x)|u|^{p^*}{\rm d} x,
\end{aligned}$$
for any $u \in X$ and $\phi \in D^{1,m}(\mathbb R^3)$. By standard arguments, the functional $\mathcal{F}$ is of class $C^1$ and its critical points are weak solutions of \eqref{maineq}. In particular, its partial derivatives are
$$\begin{aligned}
\partial_u\mathcal{F}(u,\phi)[v]=&\int_{\mathbb R^3} |\nabla u|^{p-2}\nabla u \nabla v {\rm d} x+ \int_{\mathbb R^3} |u|^{p-2}uv {\rm d} x+\int_{\mathbb R^3} |\nabla u|^{q-2}\nabla u \nabla v {\rm d} x\\ &+ \int_{\mathbb R^3} |u|^{q-2}uv {\rm d} x \pm \lambda\int_{\mathbb R^3}  \phi |u|^{\vartheta-2}uv {\rm d} x\\ &- \beta\int_{\mathbb R^3} \mathcal{W}(x) |u|^{\tau -2}u v {\rm d} x- \int_{\mathbb R^3} \mathcal{K}(x) |u|^{p^* -2}u v {\rm d} x,
\end{aligned}$$
$$\partial_{\phi}\mathcal{F}(u,\phi)[\xi]= \pm\frac{\lambda}{\vartheta}\int_{\mathbb R^3} |u|^{\vartheta} \xi {\rm d} x\mp\frac{\lambda}{\vartheta}\int_{\mathbb R^3} |\nabla\phi|^{m-2}\nabla\phi \nabla \xi {\rm d} x.
$$ Let us define $$
\mathcal{J}(u)=\mathcal{F}(u,\mathcal{P}hi(u)),
$$
then since $\phi_u$ solves \eqref{probphi}, then $\mathcal{J}$ takes the form
\begin{equation*}\begin{aligned}
\mathcal{J}(u)=\frac{1}{p}\|u\|_{1,p}^p&+\frac{1}{q}\|u\|_{1,q}^q\pm \frac{\lambda}{ \vartheta m'}\int_{\mathbb R^3} \phi_u |u|^{\vartheta}{\rm d} x\\&-\frac{\beta}{\tau}\int_{\mathbb R^3} \mathcal{W}(x)|u|^{\tau}{\rm d} x-\frac{1}{p^*}\int_{\mathbb R^3} \mathcal{K}(x)|u|^{p^*}{\rm d} x.
\end{aligned}\end{equation*}

Although our objective is to study problem \eqref{maineq} through a unified approach, it will become necessary at certain stages to distinguish between the two cases. For the sake of clarity, we define the functionals corresponding to the plus and minus signs as $J_+$ and $J_-$, respectively.

\begin{equation}\label{defJ+}\begin{aligned}
\mathcal{J}_+(u)=\frac{1}{p}\|u\|_{1,p}^p&+\frac{1}{q}\|u\|_{1,q}^q+ \frac{\lambda}{ \vartheta m'}\int_{\mathbb R^3} \phi_u |u|^{\vartheta}{\rm d} x\\&-\frac{\beta}{\tau}\int_{\mathbb R^3} \mathcal{W}(x)|u|^{\tau}{\rm d} x-\frac{1}{p^*}\int_{\mathbb R^3} \mathcal{K}(x)|u|^{p^*}{\rm d} x,
\end{aligned}\end{equation}
\begin{equation}\label{defJ-}\begin{aligned}
\mathcal{J}_-(u)=\frac{1}{p}\|u\|_{1,p}^p&+\frac{1}{q}\|u\|_{1,q}^q- \frac{\lambda}{ \vartheta m'}\int_{\mathbb R^3} \phi_u |u|^{\vartheta}{\rm d} x\\&-\frac{\beta}{\tau}\int_{\mathbb R^3} \mathcal{W}(x)|u|^{\tau}{\rm d} x-\frac{1}{p^*}\int_{\mathbb R^3} \mathcal{K}(x)|u|^{p^*}{\rm d} x.
\end{aligned}\end{equation}

Clearly, $\mathcal{J} \in C^1(X, \mathbb R)$ and for any $u \in X$,
$$
\mathcal{J}'(u)=\partial_u(\mathcal{F}(u,\mathcal{P}hi(u))+\partial_{\phi}(\mathcal{F}(u,\mathcal{P}hi(u))\mathcal{P}hi'(u).
$$
However, recalling that
$$
\partial_{\phi}\mathcal{F}(u,\mathcal{P}hi(u))=0,
$$
we can write
$$
\begin{aligned}
\mathcal{J}'(u)[v]=  &\int_{\mathbb R^3} |\nabla u|^{p-2}\nabla u \nabla v {\rm d} x+ \int_{\mathbb R^3} |u|^{p-2}uv {\rm d} x+\int_{\mathbb R^3} |\nabla u|^{q-2}\nabla u \nabla v {\rm d} x\\ &+ \int_{\mathbb R^3} |u|^{q-2}uv {\rm d} x \pm \lambda\int_{\mathbb R^3}  \phi_u |u|^{\vartheta -2}uv {\rm d} x  \\ &- \beta\int_{\mathbb R^3} \mathcal{W}(x) |u|^{\tau-2}u v {\rm d} x- \int_{\mathbb R^3} \mathcal{K}(x) |u|^{p^*-2}u v {\rm d} x,
\end{aligned}$$
for any $v \in X$, see Proposition 2.3 in \cite{DSW23}.
Therefore to look for weak solutions of \eqref{maineq}, it suffices to look for critical points of the functional $\mathcal{J}$.

\section{Proof of Theorem \ref{th1}}\label{proof}

With the variational framework now established, we can delve into the core of the proof of Theorem~\ref{th1}. We begin with taking Palais-Smale sequences associated to the functional $\mathcal{J}$ and first proving their boundedness. However, due to the presence of the nonlocal term, this is possible only in some cases.

\begin{lemma}\label{lembound}
Assume \eqref{defthetamain}, \eqref{K}, \eqref{W}  and $p< \tau <p^*$.
\begin{itemize}
\item Let $(u_n)_n$ be a $(PS)_c$ sequence for $\mathcal{J}_+$ in $X$, with $c\in \mathbb R$ under the assumptions
\begin{equation}\label{bound:PS:J+}
\max\{p,\vartheta m'\}<\tau<p^*\qquad \max\left\{1,\frac{q}{(m^*)'}\right\}<\vartheta<\frac{p^*}{m'} 
\end{equation}
    or
\begin{equation}\label{bound:PS:J+2}
\tau=\vartheta m' \qquad \max\left\{1,\frac{q}{(m^*)'},\frac{p}{m'}\right\}<\vartheta<\frac{p^*}{m'}
\end{equation}
Then, $(u_n)_n$ is bounded in $X$. 
\item Let $(u_n)_n$ be a $(PS)_c$ sequence for $\mathcal{J}_-$ in $X$, with $c\in \mathbb R$ under the assumptions \eqref{cond-}
    or \eqref{bound:PS:J+2}.
Then, $(u_n)_n$ is bounded in $X$.
\end{itemize}
\end{lemma}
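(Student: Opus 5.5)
The plan is the classical Ambrosetti--Rabinowitz scheme: estimate $\mathcal{J}(u_n)-\frac1\tau\mathcal{J}'(u_n)[u_n]$ from above by $c+1+o(1)\|u_n\|$ and from below by a coercive quantity. Using $\phi_u$ from Proposition \ref{prop2.11}, i.e. $\int|\nabla\phi_u|^m=\int\phi_u|u|^\vartheta$, and writing $A_n:=\int\phi_{u_n}|u_n|^\vartheta\,{\rm d}x\ge0$, I expect
\[
\mathcal{J}(u_n)-\tfrac1\tau\mathcal{J}'(u_n)[u_n]=\Big(\tfrac1p-\tfrac1\tau\Big)\|u_n\|_{1,p}^p+\Big(\tfrac1q-\tfrac1\tau\Big)\|u_n\|_{1,q}^q\pm\lambda\Big(\tfrac1{\vartheta m'}-\tfrac1\tau\Big)A_n+\Big(\tfrac1\tau-\tfrac1{p^*}\Big)\int\mathcal{K}|u_n|^{p^*}.
\]
The $\mathcal{W}$-term cancels exactly. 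The $\mathcal{K}$-term is nonnegative since $\tau<p^*$, and the first two coefficients are positive since $q\le p<\tau$. Note that $\mathcal{J}_\pm'(u_n)[u_n]$ carries $\pm\lambda A_n$, while $\mathcal{J}_\pm$ carries $\pm\frac{\lambda}{\vartheta m'}A_n$. This gives the coefficient $\frac1{\vartheta m'}-\frac1\tau$.

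The cases then split by the sign of $\pm(\frac1{\vartheta m'}-\frac1\tau)$.
\begin{itemize}
\item For $\mathcal{J}_+$ under \eqref{bound:PS:J+}, $\tau>\vartheta m'$, so the coefficient of $A_n$ is positive and the term can be dropped.
\item Under \eqref{bound:PS:J+2}, $\tau=\vartheta m'$, so the term vanishes for both signs. Here $\vartheta>p/m'$ ensures $\tau>p$, so the first two coefficients stay positive.
\item For $\mathcal{J}_-$ under \eqref{cond-}, $\tau<\vartheta m'$, so $-\lambda(\frac1{\vartheta m'}-\frac1\tau)>0$ and the term can again be dropped.
\end{itemize}
In every case this yields
\[
c_1\big(\|u_n\|_{1,p}^p+\|u_n\|_{1,q}^q\big)\le c+1+\|\mathcal{J}'(u_n)\|_{X'}\|u_n\|\le C+o(1)\|u_n\|.
\]

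To close, I argue by contradiction: suppose $\|u_n\|\to\infty$ along a subsequence. If $\|u_n\|_{1,p}\to\infty$, then $\|u_n\|_{1,p}^p$ dominates $\|u_n\|$ (since $p>1$, possibly $\|u_n\|_{1,q}$ also large), giving a contradiction. The inhomogeneous splitting needs care, because $\|u\|=\|u\|_{1,p}+\|u\|_{1,q}$ and each summand can be large or bounded separately. Set $t_n=\|u_n\|_{1,p}$ and $s_n=\|u_n\|_{1,q}$. The estimate gives $t_n^p+s_n^q\le C(1+t_n+s_n)$ (after absorbing the bounded $o(1)$ factor). Since $p,q>1$, this forces both $t_n$ and $s_n$ to be bounded, because $\max(t_n,s_n)\to\infty$ would make the left side superlinear. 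Note that $\|u\|_{1,p}^p$ is comparable to $\|\nabla u\|_p^p+\|u\|_p^p$ up to constants, which is what appears in $\mathcal{J}$.

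I expect the main obstacle to be bookkeeping rather than analysis. One must check that $\mathcal{J}'(u)[u]$ gives exactly $\lambda A_n$, via the identity $\partial_\phi\mathcal F=0$ used in Section \ref{var}. One must also check that the restrictions on $\vartheta$ (through \eqref{defthetamain} and Proposition \ref{331}(iii)) make $A_n$ finite and well defined, so that dropping a signed multiple of it is legitimate. The rest is a direct sign analysis.
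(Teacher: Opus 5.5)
Your proposal is correct and follows the paper's proof: the same combination $\mathcal{J}(u_n)-\frac1\tau\mathcal{J}'(u_n)[u_n]$, the same sign analysis of the nonlocal coefficient $\pm\lambda\bigl(\frac{1}{\vartheta m'}-\frac1\tau\bigr)$ in the three cases, and the same superlinearity contradiction at the end. Your version is slightly more careful on two points. You keep the error term $o(1)\|u_n\|$ coming from $\mathcal{J}'(u_n)[u_n]$, whereas the paper writes $c+o(1)$, which tacitly assumes boundedness. You also spell out the closing step for the inhomogeneous norm $\|u_n\|_{1,p}+\|u_n\|_{1,q}$, which the paper only says ``follows easily.''
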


\begin{proof} 
Take any $(u_n)_n$ $(PS)_c$ sequence for $\mathcal{J}$ in $X$, with $c\in \mathbb R$. By definition
$$\mathcal{J}(u_n)\to \, c \quad\mbox{and}\quad  \mathcal{J}'(u_n)\to 0 \,\,\mbox{in } X',$$
as $n\to\infty$. Thus,
\begin{equation}\label{PSbound}\begin{aligned}
c+&o(1)=\mathcal{J}(u_n)-\frac{1}{\tau}\mathcal{J}'(u_n)u_n\\&=\left(\frac{1}{p}-\frac{1}{\tau}\right)\|u_n\|_{1,p}^p+\left(\frac{1}{q}-\frac{1}{\tau}\right)\|u_n\|_{1,q}^q\\ &\pm \frac{\lambda}{ \vartheta}\left(\frac{1}{m'}-\frac{\vartheta}{\tau}\right)\int_{\mathbb R^3} \phi_{u_n} |u_n|^{\vartheta}{\rm d} x-\left(\frac{1}{p^*}-\frac{1}{\tau}\right)\int_{\mathbb R^3} \mathcal{K}(x)|u_n|^{p^*}{\rm d} x\\&\ge \left(\frac{1}{p}-\frac{1}{\tau}\right)(\|u_n\|_{1,p}^p+\|u_n\|_{1,q}^q)\pm \frac{\lambda}{ \vartheta}\left(\frac{1}{m'}-\frac{\vartheta}{\tau}\right)\int_{\mathbb R^3} \phi_{u_n} |u_n|^{\vartheta}{\rm d} x, 
\end{aligned}\end{equation}
where we have used $\tau<p^*$, $q\le p$ and \eqref{K}.

Both for $\mathcal{J}_+$ or $\mathcal{J}_-$, by employing the relative assumptions \eqref{bound:PS:J+}-\eqref{bound:PS:J+2} and \eqref{cond-}-\eqref{bound:PS:J+2}, we obtain the nonnegativity of the nonlocal term. 

So that \eqref{PSbound} gives
 $$\begin{aligned}c+o(1)\ge&\left(\frac{1}{p}-\frac{1}{\tau}\right)(\|u_n\|_{1,p}^p+\|u_n\|_{1,q}^q).
 \end{aligned}$$
 Letting $\|u_n\|=\|u_n\|_{1,p}+\|u_n\|_{1,q}\to\infty$, the conclusion follows easily by contradiction.
\end{proof}

\begin{remark}
Note that the assumptions on $\tau$ and $\vartheta$ in Lemma \ref{lembound} are necessary to ensure the positivity of the nonlocal term's coefficients for both $\mathcal{J}_+$ and $\mathcal{J}_-$. This allows the term to be bounded from below by zero. Note that adopting a negative coefficient would make it impossible to compare the operator's norm with the $L^{p^*}$ norm of the critical term. This difficulty persists even when using a weighting factor different from $1/\tau$ and estimating the nonlocal term by \eqref{disphi}.
\end{remark}

Next, we verify the Mountain Pass geometry and study the Palais-Smale sequence given by Theorem \ref{mptheorem}, recovering the compactness necessary to ensure the existence of a solution.

\begin{lemma}\label{pl106}
Assume \eqref{W}, \eqref{K}, \eqref{defthetamain}, $p\le \tau <p^*$.
Then, properties
\begin{itemize}
\item[(i)] there exists $\rho>0$ and $\delta>0$ so that $\mathcal{J}(u)\ge \delta$ for any $u \in X$ with $\|u\|=\rho$,
\item[(ii)] there exists $v \in X$ such that $\|v\|>\rho$ and $\mathcal{J}(v)<0$
\end{itemize}
 hold for $\mathcal{J}_+$ if
 \begin{equation*}
\max\left\{1,\frac{q}{(m^*)'}\right\}<\vartheta<\frac{p^*}{m'}.
\end{equation*}
While they hold for $\mathcal{J}_-$ if
\begin{equation}\label{theta_new}
\max\left\{1,\frac{q}{(m^*)'}, \frac{p}{m'}\right\}<\vartheta<\frac{p^*}{(m^*)'}.
\end{equation}
\end{lemma}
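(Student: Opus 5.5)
The plan is to verify the two geometric conditions directly, working with the functionals \eqref{defJ+}--\eqref{defJ-}. The only input on the nonlocal term is Proposition \ref{331}. By (i) of that proposition together with the Poisson equation tested with $\phi_u$, we have $\int_{\mathbb R^3}\phi_u|u|^\vartheta{\rm d}x=\|\nabla\phi_u\|_m^m\ge 0$. Property (iii) then gives
\begin{equation*}
0\le \int_{\mathbb R^3}\phi_u|u|^\vartheta{\rm d}x\le C\|u\|^{\frac{\vartheta m}{m-1}}=C\|u\|^{\vartheta m'} .
\end{equation*}
Property (ii) gives the exact scaling $\int_{\mathbb R^3}\phi_{tu}|tu|^\vartheta{\rm d}x=t^{\vartheta m'}\int_{\mathbb R^3}\phi_u|u|^\vartheta{\rm d}x$.

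\textbf{Step (i), positivity on a small sphere.} Take $\|u\|=\rho\le 1$, so that $\|u\|_{1,p},\|u\|_{1,q}\le 1$. Since $q\le p$, we get $\|u\|_{1,q}^q\ge\|u\|_{1,q}^p$, and convexity then yields
\begin{equation*}
\frac1p\|u\|_{1,p}^p+\frac1q\|u\|_{1,q}^q\ge \frac{2^{1-p}}{p}\|u\|^p .
\end{equation*}
For the subcritical term we use Hölder with \eqref{W} and the embedding \eqref{embX}:
\begin{equation*}
\int_{\mathbb R^3}\mathcal W|u|^\tau{\rm d}x\le\|\mathcal W\|_\eta\|u\|_{p^*}^\tau\le C\|u\|^\tau .
\end{equation*}
For the critical term, \eqref{K} and \eqref{embX} give $\int_{\mathbb R^3}\mathcal K|u|^{p^*}{\rm d}x\le C\|u\|^{p^*}$. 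For $\mathcal J_+$ we simply drop the nonlocal term, which is nonnegative. For $\mathcal J_-$ we bound it by $C\lambda\|u\|^{\vartheta m'}$, and this is where the extra assumption $\vartheta>p/m'$ in \eqref{theta_new} enters: it makes $\vartheta m'>p$. Altogether
\begin{equation*}
\mathcal J(u)\ge \frac{2^{1-p}}{p}\rho^p-C\beta\rho^\tau-C\rho^{p^*}\bigl(-C\lambda\rho^{\vartheta m'}\bigr),
\end{equation*}
where the bracketed term is present only for $\mathcal J_-$. All exponents other than $p$ exceed $p$, so choosing $\rho$ small gives $\mathcal J(u)\ge\delta>0$.

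\textbf{Main obstacle.} The delicate point is the endpoint $\tau=p$. Then the $\mathcal W$-term is of the same order as the leading term, and positivity near zero requires a smallness condition of the type $\beta\|\mathcal W\|_\eta C_S^p<2^{1-p}$, where $C_S$ is the embedding constant. I would therefore treat $p<\tau$ as the generic case. For $\tau=p$ I would first try to exploit that only $\|u\|_{1,p}^p$, not the full norm, is needed to control $\|u\|_{p^*}^p$. Failing that, I would impose the smallness of $\beta$ explicitly in this endpoint.

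\textbf{Step (ii), a point with negative energy.} By \eqref{K}, $\mathcal K$ is continuous, nonnegative and nontrivial, so it is positive on some ball. Choose $0\le v_0\in C_c^\infty$ supported there, so that $\int_{\mathbb R^3}\mathcal K|v_0|^{p^*}{\rm d}x>0$. Using the scaling above and discarding the nonpositive $\mathcal W$-term, for $t\ge1$ we get
\begin{equation*}
\mathcal J(tv_0)\le \frac{t^p}{p}\|v_0\|_{1,p}^p+\frac{t^q}{q}\|v_0\|_{1,q}^q\pm\frac{\lambda t^{\vartheta m'}}{\vartheta m'}\int_{\mathbb R^3}\phi_{v_0}|v_0|^\vartheta{\rm d}x-\frac{t^{p^*}}{p^*}\int_{\mathbb R^3}\mathcal K|v_0|^{p^*}{\rm d}x .
\end{equation*}
For $\mathcal J_-$ the nonlocal term carries a minus sign and can be dropped. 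For $\mathcal J_+$ the hypothesis $\vartheta<p^*/m'$ gives $\vartheta m'<p^*$, so in both cases the critical term dominates and $\mathcal J(tv_0)\to-\infty$. Taking $v=tv_0$ with $t$ large gives $\|v\|>\rho$ and $\mathcal J(v)<0$.

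The remaining hypotheses in \eqref{defthetamain} are used only through Propositions \ref{prop2.11}--\ref{331}, to guarantee that $\phi_u$ exists and the estimates above hold.
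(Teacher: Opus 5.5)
For $\tau>p$ your argument is the same as the paper's. You use the same lower bound on a sphere of radius $\rho\le 1$, via $\|u\|_{1,q}^q\ge\|u\|_{1,q}^p$, convexity and H\"older with exponent $\eta$. You drop the nonlocal term for $\mathcal{J}_+$ and bound it by $C\lambda\|u\|^{\vartheta m'}$ for $\mathcal{J}_-$; you get this from $\int_{\mathbb R^3}\phi_u|u|^\vartheta{\rm d}x=\|\nabla\phi_u\|_m^m$ and Proposition~\ref{331}(iii), the paper from \eqref{disphi}, with the same exponent. Part (ii) uses the same ray argument. There you are more careful than the paper, which takes an arbitrary $u\neq0$. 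For $\mathcal{J}_+$ that can fail when $\int_{\mathbb R^3}\mathcal{K}|u|^{p^*}{\rm d}x=0$, and your $v_0$ supported where $\mathcal{K}>0$ avoids the issue.

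What your proposal leaves open is the endpoint $\tau=p$, which the statement includes. The paper's proof does not cover it either, since it explicitly uses $\tau>p$. Your fallback of imposing smallness of $\beta$ would change the statement, and it is only necessary when $q=p$. In that case the two leading terms merge into $\frac2p\|u\|_{1,p}^p$, and for $\mathcal{J}_-$ a large $\beta$ makes $\mathcal{J}_-(tu_0)<0$ for all $t>0$ along any ray with $\int_{\mathbb R^3}\mathcal{W}|u_0|^p{\rm d}x>0$. Now take the standing case $q<p$, the one assumed in Proposition~\ref{prop2.11}. Controlling the $\mathcal{W}$-term by $\|u\|_{1,p}^p$ alone is not enough when $\beta$ is large. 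The missing idea is that this term is weakly continuous, while the lower-order $q$-term forces any bad direction to be weakly null.

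Suppose (i) failed. Then there are $\rho_k\to0$ and $\|w_k\|=1$ with $\mathcal{J}(\rho_kw_k)\le\rho_k^p/k$. All negative terms are $O(\rho_k^p)$, so keeping only the $q$-term among the nonnegative ones gives $\|w_k\|_{1,q}^q\le C\rho_k^{p-q}\to0$. Hence $\|w_k\|_{1,p}\to1$ and, along a subsequence, $w_k\to0$ a.e. Since $(|w_k|^p)$ is bounded in $L^{p^*/p}(\mathbb R^3)$, it follows that $|w_k|^p\rightharpoonup0$ there. As $\mathcal{W}\in L^{\eta}=L^{(p^*/p)'}$, we get $\int_{\mathbb R^3}\mathcal{W}|w_k|^p{\rm d}x\to0$. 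Dropping instead the $q$-term and dividing by $\rho_k^p$ gives
\[
\frac1p\|w_k\|_{1,p}^p-\frac{\beta}{p}\int_{\mathbb R^3}\mathcal{W}|w_k|^p{\rm d}x-o(1)\le\frac1k .
\]
The left side tends to $1/p$, a contradiction. So for $q<p$, property (i) holds at $\tau=p$ for every $\beta>0$, and with this addition your proof covers the full statement.
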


\begin{proof}
(i) We divide the proof in two cases:
\begin{itemize}
\item[Case $\mathcal{J}_+$:] Using H\"older inequality we infer that

$$\begin{aligned}\mathcal{J}_+(u)
\ge \frac{1}{p}(\|u\|_{1,p}^p+\|u\|_{1,q}^q)-\frac{\beta}{\tau}\|\mathcal{W}\|_{\eta}\|u\|_{p^*}^\tau-\frac{1}{p^*}\|\mathcal{K}\|_{\infty}\|u\|_{p^*}^{p^*}
\end{aligned}$$

Now, taking $\|u\|\le 1$, then 
$\|u\|_{1,p},\|u\|_{1,q}\le 1$ so that, being $q\le p$ we have
$\|u\|_{1,q}^q\ge \|u\|_{1,q}^p$ and by the standard inequality $(a+b)^r\le \max\{1,2^{r-1}\}(a^r+b^r)$ for all $a,b,r>0$, also by Theorem \ref{sgn} we reach
$$\begin{aligned}\mathcal{J}_+(u)&\gtrsim \|u\|^p-\frac{\beta}{\tau}\|\mathcal{W}\|_{\eta}\|u\|^\tau-\frac{1}{p^*}\|\mathcal{K}\|_{\infty}\|u\|^{p^*}
\\
&= \|u\|^p\biggl(1-\frac{\beta}{\tau}\|\mathcal{W}\|_{\eta}\|u\|^{\tau-p}\frac{1}{p^*}\|\mathcal{K}\|_{\infty}\|u\|^{p^*-p}\bigr). 
\end{aligned}$$

\item[Case $\mathcal{J}_-$:] 

First note that, by using Holder inequality, Proposition \ref{331}, Theorem \ref{immD12}, \eqref{embX} by \eqref{defthetamain}, we get
\begin{equation}\label{disphi}\begin{aligned}
\int_{\mathbb R^3}& \phi_{u_n} |u_n|^{\vartheta} {\rm d} x\le \biggl(\int_{\mathbb R^3} |\phi_{u_n}|^{m^*} {\rm d} x \biggr)^{\frac{1}{m^*}}\biggl ( \int_{\mathbb R^3} |u_n|^{(m^*)'\vartheta}{\rm d} x\biggr) ^{\frac{1}{(m^*)'}}  \\
&\le C \|\phi_{u_n}\|_{D^{1,m}} \|u_n\|^\vartheta \le C \|u_n\|^{\frac{\vartheta}{m-1}} \|u_n\|^\vartheta = C\|u_n\|^{\vartheta m'}.
\end{aligned}\end{equation}
Thus, applying Theorem \ref{sgn} and \eqref{disphi}, we infer that
$$\mathcal{J}_-(u)\gtrsim \frac{1}{p}(\|u\|_{1,p}^p+\|u\|_{1,q}^q)- \frac{\lambda}{ \vartheta}\frac{1}{m'}\|u\|^{\vartheta m'}-\frac{\beta}{\tau}\|\mathcal{W}\|_{\eta}\|u\|^\tau-\frac{1}{p^*}\|\mathcal{K}\|_{\infty}\|u\|^{p^*}$$

Now, taking $\|u\|\le 1$, as above, then 
$\|u\|_{1,q}^q\ge \|u\|_{1,q}^p$ and by the standard inequality $(a+b)^r\le \max\{1,2^{r-1}\}(a^r+b^r)$ for all $a,b,r>0$, we reach
$$\begin{aligned}\mathcal{J}_-(u)&\gtrsim  \|u\|^p- \frac{\lambda}{ \vartheta m'}\|u\|^{\vartheta m'}-\frac{\beta}{\tau}\|\mathcal{W}\|_{\eta}\|u\|^\tau-\frac{1}{p^*}\|\mathcal{K}\|_{\infty}\|u\|^{p^*}
\\
&= \|u\|^p\biggl(1- \frac{\lambda}{ \vartheta m'}\|u\|^{\vartheta m'-p}-\frac{\beta}{\tau}\|\mathcal{W}\|_{\eta}\|u\|^{\tau-p}\frac{1}{p^*}\|\mathcal{K}\|_{\infty}\|u\|^{p^*-p}\bigr). 
\end{aligned}$$

\end{itemize}

Since $\tau>p$ and $\vartheta >p/m'$ in \eqref{theta_new}, then in both cases above there  exists a sufficiently small norm $\|u\|:=\rho (<1)$ and $\delta >0$ such that $\mathcal{J}(u)\geq \delta$ for every $u$ satisfying $\|u\|=\rho$.

(ii) For any $u\in X\setminus \{0\}$, by Proposition \ref{331}, we get
$$\begin{aligned}
\mathcal{J}(tu)=\frac{t^p}{p}\|u\|_{1,p}^p&+\frac{t^q}{q}\|u\|_{1,q}^q \pm t^{m'\vartheta} \frac{\lambda}{m' \vartheta }\int_{\mathbb R^3} \phi_{u} |u|^{\vartheta} {\rm d} x \\&-\frac{t^{\tau}}{\tau} \int_{\mathbb R^3} \mathcal{W}|u|^{\tau} {\rm d} x-\frac{t^{p^*}}{p^*}\int_{\mathbb R^3} \mathcal{K} |u|^{p^*}.
\end{aligned}$$

Now, recalling that $\mathcal{K}$ is nontrivial and satisfy \eqref{K}, we have two different situations:
\begin{itemize}
\item[Case $\mathcal{J}_+$:] Since $q<p\le\tau<p^*$ and assuming $\vartheta<p^*/m'$, then $\mathcal{J}_+(tu)\to-\infty$ as $t\to\infty$.
\item[Case $\mathcal{J}_-$:] Since $q<p\le\tau<p^*$, then $\mathcal{J}_-(tu)\to-\infty$ as $t\to\infty$.
\end{itemize}
Thus, in both cases, there exists $t_u >0$ large such that $\mathcal{J}(t_u u)<0$. Consequently, (ii) holds with $v=t_u u$.
\end{proof}

Consider for all $u\in X\setminus\{0\}$
$$\Gamma_u:=\{\gamma\in C^0([0,1],X)\,:\, \gamma(0)=0\hbox{ and }\gamma(1)=v\},$$
where $v$ is given in (ii)-Lemma \ref{pl106} with $\mathcal{J}(v)<0$ and $\|v\|>\rho$.  
Then for all $\gamma\in \Gamma_u$, it holds
$$\|\gamma(0)\|=0,\qquad
\|\gamma(1)\|>\rho,\qquad
\gamma \text{ continuous}.$$
Thus, then exists $\bar t\in (0,1)$ such that $\|\gamma(\bar t)\|=\rho$
for all $\gamma\in\Gamma_u$ implying 
$$\max_{t\in[0,1]}{\mathcal{J}(\gamma(t))}\geq \delta >0,$$
by (i)-Lemma \ref{pl106}.
Thus,
$$c_u:=\inf_{\gamma\in\Gamma_u}\,\sup_{t\in [0,1]}\mathcal{J}(\gamma(t))>0.$$
Then the hypotheses of Theorem \ref{mptheorem} are satisfied, yielding the existence of a Palais-Smale sequence at level $c_u$. In order to avoid ambiguity, when necessary, we consider
\begin{equation}\label{cu+-}\begin{aligned}
&c_u^+:=\inf_{\gamma\in\Gamma_u}\,\sup_{t\in [0,1]}\mathcal{J}_+(\gamma(t)),\qquad c_u^-:=\inf_{\gamma\in\Gamma_u}\,\sup_{t\in [0,1]}\mathcal{J}_-(\gamma(t))
\end{aligned}\end{equation}

In what follows, we establish the compactness properties for the functional $\mathcal{J}$. This analysis requires a case-by-case approach based on the sign of the nonlocal term. Specifically, we treat $\mathcal{J}_+$ and $\mathcal{J}_-$ as defined in \eqref{defJ+} and \eqref{defJ-}.

\begin{lemma}\label{pscomp}
Assume \eqref{W}, \eqref{K}, \eqref{defthetamain} and $p< \tau <p^*$. Define 
\begin{equation}\label{Csegnato}
c_{PS}:=\frac{S^{3/p}}{3\|\mathcal{K}\|_\infty^{3/p^*}}
\end{equation}
\begin{equation}\label{Csegnato'}
c_{PS}':=\left(\frac{1}{\vartheta m'}-\frac{1}{p^*}\right)\frac{S^{3/p}}{3\|\mathcal{K}\|_\infty^{3/p^*}}\quad \text{with}\quad \vartheta m'<p^*
\end{equation}
and note that $c_{PS}'<c_{PS}$.

Then, 
$\mathcal{J}_+$ satisfies the $(PS)_c$ condition for every 
\begin{itemize}
\item $c<c_{PS}'$ if \eqref{cond+} holds
\item $c<c_{PS}$ if  \eqref{cond++} holds
\end{itemize}
and $\mathcal{J}_-$ satisfies the $(PS)_c$ condition for every 
\begin{itemize}
\item $c<c_{PS}$ if \eqref{cond-} holds
\end{itemize}
\end{lemma}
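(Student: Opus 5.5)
Let $(u_n)_n$ be a $(PS)_c$ sequence for $\mathcal{J}_\pm$ with $c$ below the relevant threshold. The plan is to show, via Lemmas \ref{lions}--\ref{bennaoum}, that no mass of $|u_n|^{p^*}$ is lost, and then to pass to strong convergence.

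\textbf{Step 1 (compactness setup).} Lemma \ref{lembound} gives that $(u_n)_n$ is bounded in $X$: for $\mathcal{J}_+$ under \eqref{cond+} or \eqref{cond++}, which fall into \eqref{bound:PS:J+}/\eqref{bound:PS:J+2}, and for $\mathcal{J}_-$ under \eqref{cond-}. So, up to a subsequence,
\begin{itemize}
\item $u_n\rightharpoonup u$ in $X$ and in $D^{1,p}(\mathbb R^3)$;
\item $u_n\to u$ in $L^s_{\rm loc}$ for $s<p^*$, and a.e.;
\item $|\nabla u_n|^p\rightharpoonup\mu$ and $|u_n|^{p^*}\stackrel{*}{\rightharpoonup}\nu$.
\end{itemize}
Lemma \ref{lions} then provides the atoms $(x_j,\mu_j,\nu_j)$, and Lemma \ref{bennaoum} provides $\mu_\infty,\nu_\infty$.

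\textbf{Step 2 (concentration at a point).} Take $\psi_\varepsilon$, a cutoff centered at $x_j$ with radius $\varepsilon$, and test $\mathcal{J}'(u_n)[\psi_\varepsilon u_n]\to0$. The remaining terms behave as follows.
\begin{itemize}
\item The $q$-terms and the cross terms $\int|\nabla u_n|^{p-2}\nabla u_n\cdot\nabla\psi_\varepsilon\,u_n$ vanish as $n\to\infty$ and then $\varepsilon\to0$, by Hölder and local strong convergence. For the $q$-gradient part, $\int|\nabla u_n|^q\psi_\varepsilon\ge0$, so it can simply be dropped, which yields an inequality.
\item The subcritical term tends to $0$, since $\mathcal{W}\in L^\eta$ and $|B_\varepsilon|\to0$.
\item The nonlocal term tends to $0$: it is bounded by $\|\phi_{u_n}\|_{m^*}\,\|u_n\|^{\vartheta}_{L^{(m^*)'\vartheta}(B_\varepsilon)}$ with $(m^*)'\vartheta<p^*$, using \eqref{disphi} and Proposition \ref{331}(iv).
\end{itemize}
This leaves $\mu_j\le\mathcal{K}(x_j)\nu_j\le\|\mathcal{K}\|_\infty\nu_j$. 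Combined with $S\nu_j^{p/p^*}\le\mu_j$, either $\nu_j=0$ or $\nu_j\ge (S/\|\mathcal{K}\|_\infty)^{3/p}$, hence $\mu_j\ge S^{3/p}\|\mathcal{K}\|_\infty^{-(3-p)/p}$.

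\textbf{Step 3 (concentration at infinity).} Repeat Step 2 with a cutoff supported in $B_R^c$. This requires $\limsup_{R}\limsup_n\int_{B_R^c}\mathcal{K}|u_n|^{p^*}\le \limsup\|\mathcal{K}\|_\infty\int_{B_R^c}|u_n|^{p^*}$ and the tail of $\mathcal{W}$ in $L^\eta$ vanishing. The same lower bound follows for $\mu_\infty$ whenever $\nu_\infty>0$.

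\textbf{Step 4 (main obstacle: the energy inequality).} Next I would contradict these lower bounds using the energy, and this depends on the sign.
\begin{itemize}
\item For $\mathcal{J}_-$ under \eqref{cond-}, and for $\mathcal{J}_+$ under \eqref{cond++}: write $c=\lim[\mathcal{J}(u_n)-\frac1p\mathcal{J}'(u_n)u_n]$. The coefficients $\frac1q-\frac1p\ge0$ and $\beta(\frac1p-\frac1\tau)\int\mathcal{W}|u_n|^\tau\ge0$ are harmless. The nonlocal coefficient is $\pm\lambda(\frac{1}{\vartheta m'}-\frac1p)$, which is $\ge0$ in both cases ($\vartheta<p/m'$ for $+$, $\vartheta>p/m'$ for $-$). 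This gives $c\ge(\frac1p-\frac1{p^*})\int\mathcal{K}|u_n|^{p^*}\to\frac1N\|\mathcal{K}\|_\infty\nu_j\ge\frac13 S^{3/p}\|\mathcal{K}\|_\infty^{-3/p^*}=c_{PS}$, a contradiction.
\item For $\mathcal{J}_+$ under \eqref{cond+}, where $\vartheta m'\ge p$: use instead the multiplier $1/(\vartheta m')$. The operator coefficients $\frac1p-\frac1{\vartheta m'}$ and the $\tau$ coefficient $\frac1{\vartheta m'}-\frac1\tau$ are nonnegative because $\tau\ge\vartheta m'$. This gives $c\ge(\frac1{\vartheta m'}-\frac1{p^*})\|\mathcal{K}\|_\infty\nu_j$, which yields the bound with $c'_{PS}$. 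One must track carefully that the factor $\frac13$ is consistent with the normalization in \eqref{Csegnato'}; I expect this bookkeeping to require a small adjustment.
\end{itemize}
So $\nu_j=\nu_\infty=0$, and $u_n\to u$ in $L^{p^*}_{\mathcal{K}}$.

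\textbf{Step 5 (strong convergence).} Using Proposition \ref{331}(iv), the local compactness of the $\tau$-term (valid since $\mathcal{W}\in L^\eta$ and $\tau<p^*$), and the above, we get $\langle \mathcal{J}'(u_n)-\mathcal{J}'(u),u_n-u\rangle\to0$. Here the nonlocal part is handled through $\int\phi_{u_n}|u_n|^{\vartheta-2}u_n(u_n-u)\to0$, via \eqref{disphi}-type Hölder bounds together with the weak convergences. The $(S_+)$ property of $-\Delta_p-\Delta_q$ plus the lower order terms (Simon's inequalities) then gives $u_n\to u$ in $X$.
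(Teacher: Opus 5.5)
Your outline follows the paper's proof almost step by step: boundedness from Lemma \ref{lembound}, testing with $\psi_\varepsilon u_n$ and $\psi_R u_n$, the multipliers $1/p$ and $1/(\vartheta m')$, then Brezis--Lieb and Simon's inequalities. For $\mathcal{J}_+$ it works, but there is a genuine gap in Step 3 for $\mathcal{J}_-$.

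In Step 2 you dispose of the nonlocal term by local strong convergence in $L^{(m^*)'\vartheta}(B_\varepsilon(x_j))$. On $B_R^c$ no such compactness is available, and your list of what Step 3 requires leaves this term out.

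\begin{itemize}
\item For $\mathcal{J}_+$ this is harmless. The term $\lambda\int\phi_{u_n}|u_n|^\vartheta\psi_R\,{\rm d}x\ge0$ sits on the same side as $\int|\nabla u_n|^p\psi_R\,{\rm d}x$ and can be discarded.
\item For $\mathcal{J}_-$ it sits on the side of the critical term. What you actually obtain is
\[
\mu_\infty\le\|\mathcal{K}\|_\infty\nu_\infty+\lambda\,\ell_\infty,\qquad
\ell_\infty:=\limsup_{R\to\infty}\limsup_{n\to\infty}\int_{\mathbb R^3}\phi_{u_n}|u_n|^{\vartheta}\psi_R\,{\rm d}x .
\]
This no longer forces the alternative $\nu_\infty=0$ or $\nu_\infty\ge(S/\|\mathcal{K}\|_\infty)^{3/p}$.
\end{itemize}

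Nothing makes $\ell_\infty$ vanish. By Proposition \ref{331}(ii) the nonlocal energy is translation invariant, so $u_n$ can carry nonlocal energy off to infinity without concentrating.

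This cannot be fixed within the present argument, because the $\mathcal{J}_-$ claim fails as stated. Here is a counterexample sketch.
\begin{itemize}
\item Take $\mathcal{K}\equiv1$, which \eqref{K} allows. Let $\mathcal{J}_\infty$ be $\mathcal{J}_-$ without the $\mathcal{W}$-term.
\item Under \eqref{cond-} one has $\vartheta m'>p\ge q$, so $\mathcal{J}_\infty$ has the mountain pass geometry. Testing along a fixed ray $tu_0$ shows its mountain pass level satisfies $0<c_\infty<c_{PS}$ once $\lambda$ is large.
\item Let $(v_n)_n$ be a bounded $(PS)_{c_\infty}$ sequence for $\mathcal{J}_\infty$, and choose $|y_n|\to\infty$ fast enough. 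Then $u_n:=v_n(\cdot-y_n)$ is a $(PS)_{c_\infty}$ sequence for $\mathcal{J}_-$, since the $\mathcal{W}$-terms and their derivatives vanish because $\mathcal{W}\in L^\eta$. Moreover $u_n\rightharpoonup0$.
\item A strongly convergent subsequence would force $c_\infty=\mathcal{J}_-(0)=0$, a contradiction.
\end{itemize}

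The paper's own proof skips the same point when it asserts \eqref{nabla_le_K_ustar} with $\psi_R$ in place of $\psi_\varepsilon$, so following it does not close the gap. A correct statement for $\mathcal{J}_-$ needs one of two things. Either use the threshold $\min\{c_{PS},c_\infty\}$ together with a splitting argument for the part escaping to infinity. Or work in a setting with compactness at infinity for the nonlocal term, e.g.\ radial $u$ when $\mathcal{K}$ and $\mathcal{W}$ are radial.

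Smaller points:
\begin{itemize}
\item In Step 4, the limit of $\int\mathcal{K}|u_n|^{p^*}\psi_\varepsilon\,{\rm d}x$ is bounded below by $\mathcal{K}(x_j)\nu_j$, not by $\|\mathcal{K}\|_\infty\nu_j$. The correct chain is $\mathcal{K}(x_j)\nu_j\ge\mu_j\ge S^{3/p}\|\mathcal{K}\|_\infty^{-3/p^*}$, which you already have from Step 2.
\item Under \eqref{cond+} no bookkeeping adjustment is needed. You get $c\ge 3c'_{PS}$, which already contradicts $c<c'_{PS}$.
\item In Step 5, weak convergence is not enough for the nonlocal term. Use $u_n\to u$ in $L^{(m^*)'\vartheta}$, obtained by interpolating between boundedness in $L^q$ and convergence in $L^{p^*}$.
\end{itemize}
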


\begin{remark}
Note that while for $\mathcal{J}_-$ the conditions in Lemma \ref{pscomp} and Lemma \ref{PSbound} coincide, conditions \eqref{cond+} and \eqref{cond++} are stronger than \eqref{bound:PS:J+} and \eqref{bound:PS:J+2}. Consequently, Lemma \ref{lembound} applies, ensuring the boundedness of $(PS)$ sequences.
\end{remark}

\begin{proof}[Proof of Lemma \ref{pscomp}]
Let $(u_n)_n$ be a $(PS)_c$ sequence in $X$, so that, by Lemma \ref{lembound}, then $(u_n)_n$ is bounded in $X$, indeed assumptions on $\tau,\theta$ above are stronger than \eqref{bound:PS:J+}, \eqref{bound:PS:J+2}, \eqref{cond-}. 
Since $X$ is a reflexive Banach space, then, by Banach-Alaoglu Theorem,
 there exists $u\in X$  such that, up to subsequences, we get
$u_{n}\rightharpoonup u$ in $X$.
 Since $\nabla u_{n}\rightharpoonup \nabla u$ in $L^{p}(\mathbb R^3)\cap L^{q}(\mathbb R^3)$, the sequence of measures
 $(|\nabla u_n|^p {\rm d} x+|\nabla u_n|^q {\rm d} x)_n$ is bounded and
$|\nabla u_{n}|^{p}{\rm d} x+|\nabla u_n|^q {\rm d} x\rightharpoonup \mu$. Analogously,
$|u_n|^{p^*}{\rm d} x\rightharpoonup \nu,$
where $\mu, \nu$ are bounded nonnegative measures on $\mathbb R^3$.

By Proposition \ref{lions} and Lemma \ref{bennaoum}, there exist at most countable set $J$, a family $(x_j)_{j\in J}$ of distinct points in
$\mathbb R^3$ and two families $(\nu_j)_{j\in J}, \,(\mu_j)_{j\in J}\in ]0,\infty[$ such that
\eqref{ineqmeasures}, \eqref{nunuinfty} hold, with $\nu_\infty$, $\mu_\infty$ defined in \eqref{munuinf}, satisfying
\begin{equation}\label{6.22}
S\nu_{j}^{p/p^{*}}\le\mu_{j}, \qquad S\nu_{\infty}^{p/p^*}\le \mu_\infty.
\end{equation}

Take a standard cut-off function $\psi\in C_{c}^{\infty}(\mathbb R^3)$, such that $0\le\psi\le1$ in $\mathbb R^3$,
$\psi=0$ for $|x|>1$, $\psi=1$ for $|x|\le 1/2$. For each index $j\in J$ and each $0<\varepsilon<1$, define
$$\psi_{\varepsilon}(x)=\psi\left(\frac{x-x_{j}}{\varepsilon}\right).$$

Since $\mathcal{J}'(u_{n})\psi\to0$ being $(u_{n})_n$ a $(PS)_c$ sequence,  we have, as $n\to\infty$,
$$\begin{aligned}
\int_{\mathbb R^3} &|\nabla u_n|^{p-2}\nabla u_n \nabla \psi {\rm d} x+ \int_{\mathbb R^3} |u_n|^{p-2}u_n\psi {\rm d} x+\int_{\mathbb R^3} |\nabla u_n|^{q-2}\nabla u_n \nabla \psi {\rm d} x\\ &+ \int_{\mathbb R^3} |u_n|^{q-2}u_n\psi {\rm d} x \pm \lambda\int_{\mathbb R^3}  \phi_{u_n} |u_n|^{\vartheta -2}u_n\psi {\rm d} x \\ & - \beta\int_{\mathbb R^3} \mathcal{W}(x) |u_n|^{\tau-2}u_n \psi {\rm d} x- \int_{\mathbb R^3} \mathcal{K}(x) |u_n|^{p^*-2}u_n \psi {\rm d} x=o(1).
\end{aligned}$$
Now, choosing $\psi=\psi_\varepsilon u_n$, which is still bounded, we obtain 
\begin{equation}\label{dis1}\begin{aligned}
\int_{\mathbb R^3} &\bigl(|\nabla u_n|^p+|\nabla u_n|^q\bigr)\psi_\epsilon {\rm d} x\\&+ \int_{\mathbb R^3} u_n\bigl(|\nabla u_n|^{p-2}+
|\nabla u_n|^{q-2}\bigr)\nabla u_n \nabla \psi_\epsilon {\rm d} x\\
&+ \int_{\mathbb R^3} (|u_n|^p+|u_n|^q)\psi_\epsilon {\rm d} x\pm \lambda\int_{\mathbb R^3}  \phi_{u_n} |u_n|^\vartheta \psi_\epsilon {\rm d} x  \\ &- \beta\int_{\mathbb R^3} \mathcal{W}(x) |u_n|^\tau\psi_\epsilon {\rm d} x- \int_{\mathbb R^3} \mathcal{K}(x) |u_n|^{p^*} \psi_\epsilon {\rm d} x=o(1).
\end{aligned}\end{equation}
Applying H\"older inequality 
\begin{equation}\label{6.4}
\begin{aligned}
\biggl|\int_{\mathbb R^3}&{u_n}|\nabla u_n|^{p-2}\nabla u_n\cdot\nabla \psi_\varepsilon {\rm d} x\biggr|\\&\le \| u_n\|^{p-1} \biggl(\int_{B_{\varepsilon}(x_j)}|u_n|^p|\nabla \psi_\varepsilon|^p{\rm d} x\biggr)^{1/p}.
\end{aligned}
\end{equation}
Furthermore, 
$u_n\to u$ in
$L^{p}_\mathrm{loc}(\mathbb R^3)$,  by compactness, yielding, up to subsequences
\begin{equation}\label{aeconv}
u_n(x)\to u(x)\qquad \text{a.e. \,\, in}\quad \omega=\overline{B}_{\varepsilon}(x_{j}) 
\end{equation} 
and there exists $g\in L^p(\omega, \mathbb R)$
such that $|u_{n}(x)|\le g(x)$ a.e.\ in $\omega$.  Thus, $|u_n(x)| |\nabla\psi_{\varepsilon}(x)|\le C g(x)$
a.e.\ in $\omega$, as well as in $\mathbb R^3$ being $\psi\in C_{c}^{\infty}(\mathbb R^3)$, and in turn, Lebesgue Dominated Convergence Theorem gives
\begin{equation}\label{conv_grad}
|u_n\nabla\psi_{\varepsilon}|\to|u\nabla\psi_{\varepsilon}| \ \textnormal{in} \ L^{p}(\mathbb R^3).
\end{equation}
Consequently, by \eqref{conv_grad} and
 H\"older inequality with exponents
$3/(3-p)$ and $3/p$, we obtain
$$\begin{aligned}
\lim_{n\to\infty}&\int_{B_{\varepsilon}(x_{j})}|u_n|^{p}|\nabla \psi_\varepsilon|^p {\rm d} x\\
&\le  \left(\int_{B_{\varepsilon}(x_{j})}|\nabla\psi_{\varepsilon}|^{3}{\rm d} x\right)^{p/3}
\left(\int_{B_{\varepsilon}(x_{j})}|u|^{p^*}{\rm d} x\right)^{p/p^*}\\&\le C\left(\int_{B_{\varepsilon}(x_{j})}|u|^{p^{*}}{\rm d} x
\right)^{p/p^{*}},
\end{aligned}$$
where we used that $|\nabla \psi_\varepsilon|\le C \varepsilon^{-1}$ and $|B(x_i,\varepsilon)| \le C'\varepsilon^3$.
In turn, using the boundedness of $(u_n)_n\in X$ by Lemma \ref{lembound}, then \eqref{6.4} gives
$$\limsup_{n\to\infty}\biggl|\int_{\mathbb R^3}{u_n}|\nabla u_n|^{p-2}\nabla u_n\cdot\nabla \psi_\varepsilon {\rm d} x\biggr|\le C \left(\int_{B_{\varepsilon}(x_{j})}|u|^{p^{*}}{\rm d} x
\right)^{1/p^{*}}.$$
Arguing in the same way when $p=q$ and letting $\varepsilon\to 0$ from $u\in L^{p^*}(\mathbb R^3)\cap L^{q^*}(\mathbb R^3)$, we get
\begin{equation*}
\lim_{\varepsilon\to 0}\limsup_{n\to\infty}\biggl|\int_{\mathbb R^3}u_n(|\nabla u_n|^{p-2}+|\nabla u_n|^{q-2})\nabla u_n\nabla \psi_\varepsilon {\rm d} x\biggr|=0.
\end{equation*}
Moreover, by \eqref{aeconv}
$$\biggl|\int_{\mathbb R^3} (|u_n|^p+|u_n|^q)\psi_\epsilon {\rm d} x\biggr|\le \int_{B_\varepsilon(x_j)}(|u_n|^p+|u_n|^q) {\rm d} x=o(1),$$
$$\biggl|\int_{\mathbb R^3} \mathcal{W}(x) |u_n|^\tau\psi_\epsilon {\rm d} x\biggr|=o(1),$$
as $n\to\infty$ and $\varepsilon\to0$ since $u_n\to u$ in $L^{s}_\mathrm{loc}(\mathbb R^3)$ for all $1<s<p^*$, $u\in X$ and by using the Lebesgue Dominated Convergence Theorem.

Furthermore, by \eqref{disphi}
\begin{equation}\label{stimaphiu}
\biggl|\int_{\mathbb R^3}  \phi_u |u_n|^\vartheta \psi_\epsilon {\rm d} x\biggr|\le C\|u_n\|_{W^{1,p}(B_\varepsilon(x_j))}^{\vartheta m'}=o(1)
\end{equation}
as $n\to\infty$ and $\varepsilon\to0$ by using the Lebesgue Dominated Convergence Theorem.

Then, from \eqref{dis1} and the inequalities above, we can conclude for $n$ large
\begin{equation}\label{nabla_le_K_ustar}\int_{\mathbb R^3} (|\nabla u_n|^p +|\nabla u_n|^q)\psi_\varepsilon {\rm d} x \le\int_{\mathbb R^3}\mathcal{K}(x)|u_n|^{p^*}\psi_\varepsilon {\rm d} x+o(1),\end{equation}
yelding for $\varepsilon\to0$
\begin{equation}\label{6.23}
\mu_{j}\le \mathcal{K}(x_{j})\nu_{j},
\end{equation}
since $\mathcal{K}\in C(\mathbb R^3)$ and by \eqref{ineqmeasures}. Consequently, either  $\nu_j=0$ and then also $\mu_j=0$,  or $\nu_j>0$. We claim that  
the latter case cannot occur for each $j\in J$, with $J$ given in Lemma \ref{lions}. First observe that  combining \eqref{6.23} and \eqref{6.22}, we have
\begin{equation}\label{sxj}
S\leq \mathcal{K}(x_j)\nu_j^{p/3},
\end{equation}
that is concentration of the measure $\nu$ can occur only at those points $x_j$ where $\mathcal{K}(x_j)>0$.
Consequently, from \eqref{6.22} and \eqref{6.23} the measure $\mu$ can concentrate at points
in which the measure $\nu$ can.
In turn, zeros for $\mathcal{K}$ cannot belong to $X_J=\{x_j: j\in J\}$.

Let $J_1=\{j\in J: \mathcal{K}(x_j)>0\}$, we claim that 
\begin{equation}\label{J1empty}J_1=\emptyset.\end{equation} 
If not, for any $j\in J_1$, then \eqref{sxj} implies
\begin{equation}\label{J2}
 \nu_{j}\ge \left( \frac{S}{\mathcal{K}(x_{j})}\right)^{3/p}\ge \left( \frac{S}{\|\mathcal{K}\|_\infty}\right)^{3/p},
\end{equation}
 which in particular gives that
 $|J_1|<\infty$ being $\nu$ a bounded measure; indeed, from \eqref{ineqmeasures} and \eqref{J2},
 we get 
$$\infty>\int_{\mathbb R^3}d\nu=\|u\|_{p^*}^{p^*}+\int_{\mathbb R^3}\sum_{j\in J_1}\nu_j\delta_{x_j}{\rm d} x+\nu_\infty
\ge \|u\|_{p^*}^{p^*}+\left(\frac{S}{\|K\|_\infty}\right)^{3/p}|J_1|+\nu_\infty.$$
 Now, we show that \eqref{J2} cannot occur. 
 Now we divide the proof into cases

{\it Case 1.}  If $\vartheta m'\le\tau<p^*\,$ and $\, \max\{1,q/(m^*)',p/m'\}<\vartheta<p^*/m'$, then  
\begin{equation}\label{epn2+-}\begin{aligned}
&c+o(1)=\mathcal{J}(u_n)-\frac{1}{\vartheta m'}\mathcal{J}'(u_n)u_n\\&=\left(\frac{1}{p}-\frac{1}{\vartheta m'}\right)\|u_n\|_{1,q}^p+\left(\frac{1}{q}-\frac{1}{\vartheta m'}\right)\|u_n\|_{1,q}^q\\&\,\,\,\,-\beta \left(\frac{1}{\tau}-\frac{1}{\vartheta m'}\right)\int_{\mathbb R^3} \mathcal{W}|u_n|^\tau {\rm d} x+\left(\frac{1}{\vartheta m'}-\frac{1}{p^*}\right)\int_{\mathbb R^3} \mathcal{K}(x)|u_n|^{p^*}{\rm d} x
\\&\ge \left(\frac{1}{\vartheta m'}-\frac{1}{p^*}\right)\int_{\mathbb R^3} \mathcal{K}(x)|u_n|^{p^*}{\rm d} x\\&\ge \left(\frac{1}{\vartheta m'}-\frac{1}{p^*}\right)\int_{B_{\varepsilon}(x_{j})} \mathcal{K}(x)|u_n|^{p^*}{\rm d} x,
\end{aligned}\end{equation}
for any $\varepsilon>0$. In particular, \eqref{epn2+-} holds both for $\mathcal{J}_+$ and for  $\mathcal{J}_-$.
Consequently,  inserting  \eqref{J2}  in \eqref{epn2+-} and letting $n\to\infty$ and $\varepsilon\to0$, both for $\mathcal{J}_+$ and for  $\mathcal{J}_-$, we obtain
$$c\ge \left(\frac{1}{\vartheta m'}-\frac{1}{p^*}\right) \nu_j \mathcal{K}(x_j)\ge \left(\frac{1}{\vartheta m'}-\frac{1}{p^*}\right)\frac{S^{3/p}}{\|\mathcal{K}\|_\infty^{3/p^*}}(=c_{PS}'),$$
yielding a contradiction, so that the claim \eqref{J1empty} is proved. However, this case cannot occur for $\mathcal{J}_-$ since Lemma \ref{lembound} does not hold if $\vartheta m'\le\tau<p^*$, cfr \eqref{cond-}$_1$.

{\it Case 2.} If $p\le\tau<p^*$, then 
\begin{equation*}\begin{aligned}c+o(1)&=\mathcal{J}(u_n)-\frac{1}{p}\mathcal{J}'(u_n)u_n\\&=\left(\frac{1}{q}-\frac{1}{p}\right)\|u_n\|_{1,q}^q\pm \frac{\lambda}{ \vartheta}\left(\frac{1}{m'}-\frac{\vartheta}{p}\right)\int_{\mathbb R^3} \phi_{u_n} |u_n|^{\vartheta}{\rm d} x\\&\qquad+\beta\left(\frac{1}{p}-\frac{1}{\tau}\right)\int_{\mathbb R^3} \mathcal{W}(x)|u_n|^{\tau}{\rm d} x+\frac{1}{3}\int_{\mathbb R^3} \mathcal{K}(x)|u_n|^{p^*}{\rm d} x
\\&\ge \frac 13\int_{\mathbb R^3} \mathcal{K}(x)|u_n|^{p^*}{\rm d} x\pm \frac{\lambda}{ \vartheta}\left(\frac{1}{m'}-\frac{\vartheta}{p}\right)\int_{\mathbb R^3} \phi_{u_n} |u_n|^{\vartheta}{\rm d} x,
\end{aligned}\end{equation*}
so that, if $\,\max\{1,q/(m^*)'\}<\vartheta<\min\{p^*/(m^*)',p/m'\}$, we have
$$c +o(1)=\mathcal{J}_+(u_n)-\frac{1}{p}\mathcal{J}_+'(u_n)u_n\ge \frac 13\int_{\mathbb R^3} \mathcal{K}(x)|u_n|^{p^*}{\rm d} x$$
or, if $\max\{1,q/(m^*)',p/m'\}<\vartheta<p^*/(m^*)'$, we have 
$$c +o(1)=\mathcal{J}_-(u_n)-\frac{1}{p}\mathcal{J}_-'(u_n)u_n\ge \frac 13\int_{\mathbb R^3} \mathcal{K}(x)|u_n|^{p^*}{\rm d} x.$$ 
In turn, arguing as in {\it Case} 1, both for $\mathcal{J}_+$ and for  $\mathcal{J}_-$, thanks to \eqref{J2} it holds
$$c\ge \frac 13 \nu_j \mathcal{K}(x_j)\ge \frac{S^{3/p}}{3\|\mathcal{K}\|_\infty^{3/p^*}}(=c_{PS}),$$
yielding a contradiction, so that also in {\it Case} 2 the claim \eqref{J1empty} is proved

We have so obtained that in both cases concentration cannot occur at finite points.

It remains to show that the concentration
of $\nu$ cannot occur at infinity, namely $\nu_\infty=0$. 
We use the same idea employed to prove \eqref{6.23}, but with the following cutoff function   $\psi_{R}\in C^{\infty}(\mathbb R^3)$
such that $0\le\psi_{R}\le1$ in $\mathbb R^{3}$, $\psi_R(x)=0$ for $|x|<R$ and $\psi_{R}(x)=1$ for $|x|>2R$. In this way, we obtain \eqref{nabla_le_K_ustar} with $\psi_\varepsilon$ replaced by $\psi_R$, that is 
$$\int_{B_R^c} (|\nabla u_n|^p +|\nabla u_n|^q)\psi_R {\rm d} x \le\int_{B_R^c}\mathcal{K}(x)|u_n|^{p^*}\psi_R {\rm d} x+o(1)$$
yielding $\mu_\infty\le \|\mathcal{K}\|_\infty\nu_\infty$ since
$$\lim_{R\to\infty}\limsup_{n\to\infty}\left\{\int_{B_R^c}\mathcal{K}|u_{n}|^{p^{*}}\psi_{R}{\rm d} x\right\}
\le\|\mathcal{K}\|_{\infty}\nu_{\infty},$$
by \eqref{munuinf}, which gives $\mu_\infty\le \|\mathcal{K}\|_\infty\nu_\infty$. Moreover, recalling \eqref{6.22} we arrive at $S\le \nu_{\infty}^{p/3}\|\mathcal{K}\|_\infty$.
Arguing as in {\it Case 1} and {\it Case 2} above with $B_\varepsilon(x_j)$ replaced by $B_R^c$, we get again a contradiction. 

In turn, $\nu_\infty=\mu_\infty=0$ is in force. 
Consequently, recalling also that $\nu_i=\mu_i=0$, by \eqref{nunuinfty} we end up with
$$\lim_{n\to\infty}\int_{\mathbb R^3}|u_{n}|^{p^{*}}{\rm d} x=\int_{\mathbb R^3}|u|^{p^{*}}{\rm d} x,$$
that is $\|u_n\|_{p^*}\to\|u\|_{p^*}$ as $n\to\infty$, which combined with $u_n(x)\to u(x)$ a.e. in $\mathbb R^3$, the latter obtained by \eqref{aeconv} and an exhaustion process applied to a.e. convergence on compact sets in $\mathbb R^3$,  implies $\|u_n-u\|_{p^*}\to 0$ by Brezis Lieb Lemma in \cite{BL1983}. It remains to prove 
\begin{equation}\label{claimfin}
\|u_n-u\|\to0, \quad \text{as}\,\,\,n\to\infty,
\end{equation}
 which is equivalent to prove
\begin{equation*}
\int_{\mathbb R^3}|\nabla (u_n-u)|^p+|u_n-u|^p{\rm d} x, \,\, \int_{\mathbb R^3}|\nabla (u_n-u)|^q + |u_n-u|^q {\rm d} x\to 0 
\quad\mbox{as}\quad n\to\infty.\end{equation*}
To this aim, since $(u_n)_n$ is a $(PS)_c$ sequence, we have
\begin{equation}\label{g-11}\begin{aligned}
o(1)&=\langle \mathcal{J}'(u_n)-\mathcal{J}'(u),u_n-u\rangle \\&=\int_{\mathbb R^3} \bigl(|\nabla u_n|^{p-2}\nabla u_n-|\nabla u|^{p-2}\nabla u)(\nabla (u_n-u)\bigr){\rm d} x\\
&\,\,\,\,\,+\int_{\mathbb R^3} \bigl(|\nabla u_n|^{q-2}\nabla u_n-|\nabla u|^{q-2}\nabla u)(\nabla (u_n-u)\bigr){\rm d} x\\
&\,\,\,\,\,+\int_{\mathbb R^3}(|u_n|^{p-2}u_n-|u|^{p-2}u)(u_n-u){\rm d} x\\&\,\,\,\,\,+\int_{\mathbb R^3} (|u_n|^{q-2}u_n-|u|^{q-2}u)(u_n-u){\rm d} x
\\ &\,\,\,\,\,\pm \lambda\int_{\mathbb R^3} \phi_u(|u_n|^{\vartheta-2}u_n-|u|^{\vartheta-2}u)(u_n-u){\rm d} x
\\&\,\,\,\,\,-\beta\int_{\mathbb R^3}\mathcal{W}(x)(|u_n|^{\tau-2}u_n-|u|^{\tau-2}u)(u_n-u){\rm d} x
\\&\,\,\,\,\,-\int_{\mathbb R^3}\mathcal{K}(x)(|u_n|^{p^*-2}u_n-|u|^{p^*-2}u)(u_n-u){\rm d} x.
\end{aligned}\end{equation}
By Lemma \ref{lembound}, using H\"older and Schwarz inequality, the $L^{p^*}$ function estimates of $(u_n)_n$ and the convergence of $(u_n)_n$ in $L^{p^*}(\mathbb R^3)$ we get
 
$$\begin{aligned}
\lefteqn{\left|\int_{\mathbb R^3}\mathcal{K}(x)(|u_n|^{p^*-2}u_n-|u|^{p^*-2}u)(u_n-u) {\rm d} x\right|}
\\
&\le \|\mathcal{K}\|_\infty \int_{\mathbb R^3} \left(|u_n|^{p^*-1}+|u|^{p^*-1}\right)|u_n-u|{\rm d} x\\
& \le \|\mathcal{K}\|_\infty \left(\|u_n\|_{p^*}^{p^*-1}+\|u\|_{p^*}^{p^*-1}\right)\|u_n-u\|_{p^*}=o(1).
\end{aligned}$$
Similarly, we have
$$\begin{aligned}
&\left|\int_{\mathbb R^3}\mathcal{W}(x)(|u_n|^{\tau-2}u_n-|u|^{\tau-2}u)(u_n-u){\rm d} x\right|=o(1),
\end{aligned}$$
and arguing as in \eqref{stimaphiu}
$$\left|\int_{\mathbb R^3} \phi_u(|u_n|^{\vartheta-2}u_n-|u|^{\vartheta-2}u)(u_n-u){\rm d} x\right|=o(1).$$
Thus, \eqref{g-11} reduces to 
\begin{equation*}\begin{aligned}
o(1)&=\langle \mathcal{J}'(u_n)-\mathcal{J}'(u),u_n-u\rangle\\
&=\int_{\mathbb R^3} \bigl(|\nabla u_n|^{p-2}\nabla u_n-|\nabla u|^{p-2}\nabla u)(\nabla (u_n-u)\bigr){\rm d} x\\
&+\int_{\mathbb R^3} \bigl(|\nabla u_n|^{q-2}\nabla u_n-|\nabla u|^{q-2}\nabla u)(\nabla (u_n-u)\bigr){\rm d} x\\
&+\int_{\mathbb R^3}(|u_n|^{p-2}u_n-|u|^{p-2}u)(u_n-u){\rm d} x\\&+\int_{\mathbb R^3} (|u_n|^{q-2}u_n-|u|^{q-2}u)(u_n-u){\rm d} x,
\end{aligned}\end{equation*}
so that by Simon inequality, see \cite{simon}, valid for all $s>1$ and
$a,b\in\mathbb R^3$
\begin{equation*}
|a\!-\!b|^{s}\!\lesssim\!\!\begin{cases}(|a|^{s-2}a- |b|^{s-2}b)(a-b)&\phantom{1<\,}s\!\geq \!2;\\
 \!\left(|a|^{s-2}a- |b|^{s-2}b)(a-b\!\right)\!^{\frac{s}{2}}\!\left(|a|^s+|b|^s\right)^{\frac{2-s}{2}} 
&1\!<\!s\!<\!2,\end{cases}\end{equation*}
as in pag 713 in \cite{FPR}, then \eqref{claimfin} is in force concluding the proof of the lemma.
\end{proof}

The final step consists in showing that the mountain pass levels $c_u^{\pm}$, defined in \eqref{cu+-}, remain below the thresholds $c_{PS}$ and $c_{PS}'$ from \eqref{Csegnato} and \eqref{Csegnato'}. Below these levels, the functional $\mathcal{J}$ satisfies the Palais-Smale condition, as established in the previous lemma, thereby restoring compactness.

To this aim, from now on we denote, for each $\lambda, \beta>0$,
\begin{equation}\label{clambda+-}\begin{aligned}
&\hat c_+ := \inf_{u\in X\setminus\{0\}} \max_{t\ge 0} \mathcal{J}_+(tu),\qquad \hat c_- := \inf_{u\in X\setminus\{0\}} \max_{t\ge 0} \mathcal{J}_-(tu).
\end{aligned}\end{equation}
Note that $\hat c_{\pm}\geq c_u^{\pm}$ since $\mathcal{J}_\pm(tu)<0$ for $u\in X\setminus\{0\}$ and $t$ large by the structure of $\mathcal{J}_\pm$, as observed in Theorem 4.2 in \cite{Willemminimax}.

\begin{lemma}\label{c<csegnato}
Assume \eqref{K}, \eqref{W}, \eqref{defthetamain} and $p\le  \tau <p^*$.
Let $c_{PS}$ and $\hat c_{\pm}$ defined in \eqref{Csegnato} and \eqref{clambda+-}, respectively.
 Then, there exists $\lambda^*, \beta^*>0$ such that
 \begin{itemize}
     \item $0< \hat c_+<c_{PS}$ for all $\beta>\beta^*$ and $\lambda>0$ provided that 
     $$\max\left\{1,\frac{q}{(m^*)'}\right\}<\vartheta<\frac{p^*}{m'}.$$
     \item $0< \hat c_-<c_{PS}$
     \begin{itemize}
         \item[$(a)$] for all $\beta>\beta^*$ and $\lambda>0$ provided that $$p \le\tau< \min\{\vartheta m', p^*\}\quad\text{and}\quad \max\left\{1,\dfrac{q}{(m^*)'},\dfrac{p}{m'}\right\}<\vartheta<\dfrac{p^*}{(m^*)'}$$
         \item[$(b)$] for all $\lambda>\lambda^*$ and $\beta>0$ provided that 
         $$\max\{\vartheta m',p\}\le \tau<p^*\quad\text{and}\quad \max\left\{1,\dfrac{q}{(m^*)'}\right\}<\vartheta<\dfrac{p^*}{m'}.$$
     \end{itemize}
 \end{itemize}
\end{lemma}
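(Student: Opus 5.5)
The plan is to fix a test function and estimate $\max_{t\ge0}\mathcal{J}_\pm(tu_0)$ directly. Positivity $\hat c_\pm>0$ comes for free from Lemma \ref{pl106}(i). Every ray $t\mapsto tu_0$ with $u_0\neq0$ crosses the sphere $\|u\|=\rho$, where $\mathcal{J}_\pm\ge\delta$, so $\max_{t\ge0}\mathcal{J}_\pm(tu_0)\ge\delta$ and hence $\hat c_\pm\ge\delta>0$. Under the stated ranges of $\vartheta$ (and $\vartheta>p/m'$ for $\mathcal J_-$) these are exactly the hypotheses of that lemma.

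For the upper bound I pick once and for all $u_0\in C_c^\infty(\Omega_{\mathcal W})$ with $u_0\ge0$, $u_0\not\equiv0$, so that $\int\mathcal W u_0^\tau>0$ by \eqref{W}. Using Proposition \ref{331}(ii), I write
$$g_\pm(t):=\mathcal{J}_\pm(tu_0)=\frac{t^p}{p}A+\frac{t^q}{q}B\pm\frac{\lambda t^{\vartheta m'}}{\vartheta m'}D-\frac{\beta t^\tau}{\tau}E-\frac{t^{p^*}}{p^*}F,$$
where $A,B,D,E,F\ge0$ and $E>0$. Here $F$ may vanish, but $g_\pm(t)\to-\infty$ anyway by Lemma \ref{pl106}(ii).

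Case $\mathcal J_+$ with $\beta\to\infty$. Let $t_\beta$ be a maximum point of $g_+$. Since $g_+(t_\beta)\ge\delta>0$ is not needed for the bound, I only need $t_\beta\to0$. The identity $g_+'(t_\beta)=0$ gives
$$\beta E t_\beta^{\tau}\le At_\beta^p+Bt_\beta^q+\lambda Dt_\beta^{\vartheta m'}.$$
If $t_\beta$ stayed bounded below along a sequence $\beta\to\infty$, then boundedness of $t_\beta$ from above (also forced by this inequality, because $\tau\ge p>q$ and the left side grows faster in $\beta$) would give a contradiction. More cleanly: $\beta E\le At^{p-\tau}+Bt^{q-\tau}+\lambda Dt^{\vartheta m'-\tau}$ at $t=t_\beta$. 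If $\tau>\vartheta m'$, all exponents are negative, so $t_\beta\to0$. If $\tau\le\vartheta m'$, I instead use $g_+(t)\le \frac{t^p}{p}A+\frac{t^q}{q}B+\frac{\lambda t^{\vartheta m'}}{\vartheta m'}D-\frac{\beta t^\tau}{\tau}E$ and bound the maximum on $[0,1]$ and on $[1,\infty)$ separately. On $[1,\infty)$ I use $\vartheta m'<p^*$ with the $F$ term, or else absorb by $\beta$ on a compact set after showing that the maximizer is bounded uniformly in $\beta\ge1$. Either way, $\max_t g_+\le \max_{t\ge0}\big(C(t^q+t^p)-\beta E t^\tau/\tau\big)+o(1)$, and the right side tends to $0$ as $\beta\to\infty$ since $\tau>q$. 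Choosing $\beta^*$ so that this is $<c_{PS}$ gives $\hat c_+\le\max g_+<c_{PS}$.

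Case $\mathcal J_-$(a). Here I simply drop the nonlocal term, since it carries a minus sign: $g_-(t)\le \frac{t^p}{p}A+\frac{t^q}{q}B-\frac{\beta t^\tau}{\tau}E$. Its maximum is $O(\beta^{-q/(\tau-q)})+O(\beta^{-p/(\tau-p)})\to0$ when $\tau>p$, by elementary one-variable maximization. At the endpoint $\tau=p$ I need more care: $\frac{t^p}{p}(A-\beta E)<0$ for $\beta>A/E$, leaving $\max(\frac{t^q}{q}B-\frac{t^p}{p}(\beta E-A))\to0$. So the same $\beta^*$ works for every $\lambda>0$.

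Case $\mathcal J_-$(b). I drop the $\beta$ term instead and get $g_-(t)\le \frac{t^p}{p}A+\frac{t^q}{q}B-\frac{\lambda t^{\vartheta m'}}{\vartheta m'}D$ with $D>0$, which holds since $u_0\neq0$ gives $\phi_{u_0}\not\equiv0$. Now $\vartheta m'>p\ge q$ (because $\tau\ge\vartheta m'$ and $\tau<p^*$; if only $\vartheta m'\ge p$ holds I treat equality as in the endpoint above). The maximum then tends to $0$ as $\lambda\to\infty$, which yields $\lambda^*$.

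The main obstacle is the $\mathcal J_+$ case when $\tau\le\vartheta m'$. There the positive nonlocal term may dominate at large $t$ unless the critical term is present, so the uniform-in-$\beta$ bound on maximizers has to be argued carefully. The growth $\vartheta<p^*/m'$ together with $F>0$ ensures coercivity from above, which is where I would choose $u_0$ with $\int\mathcal K u_0^{p^*}>0$ as well if needed.
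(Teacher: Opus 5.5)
\paragraph{Gap in case (b).} Your treatment of $\mathcal{J}_-$(a) is fine, and simpler than the paper's: discarding the nonpositive nonlocal term gives a bound independent of $\lambda$ and $\vartheta$, valid for every $p\le\tau<p^*$. The genuine gap is in $\mathcal{J}_-$(b). You assert $\vartheta m'>p$ ``because $\tau\ge\vartheta m'$ and $\tau<p^*$'', but those two facts only give $\vartheta m'<p^*$. Hypothesis (b) contains no lower bound $\vartheta>p/m'$, and it allows $q<\vartheta m'<p$. For example, $m=p=5/2$ and $q=3/2$ give $m'=5/3$, $(m^*)'=15/14$, $q/(m^*)'=7/5$, $p/m'=3/2$ and $p^*=15$. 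So every $\vartheta\in(7/5,3/2)$ and every $\tau\in[5/2,15)$ satisfy (b), while $\vartheta m'\in(7/3,5/2)$. There your majorant $\frac{t^p}{p}A+\frac{t^q}{q}B-\frac{\lambda}{\vartheta m'}t^{\vartheta m'}D$ tends to $+\infty$ as $t\to\infty$ for every $\lambda$, so it yields no bound at all; the equality case $\vartheta m'=p$ is not the only alternative. For the same reason, your appeal to Lemma \ref{pl106}(i) for $\hat c_->0$ does not cover (b), since that lemma assumes $\vartheta>p/m'$ for $\mathcal{J}_-$.

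\paragraph{Repair, and the role of $F>0$.} In (b), discard the $\beta$-term, which keeps $\lambda^*$ independent of $\beta$ as the statement requires, but keep the critical term to control large $t$. This is in substance what the paper does when it divides \eqref{dtjt0} by $t^{\vartheta m'-1}$ and forces $t_{\lambda,\beta}\to0$ using only $\vartheta m'>q$.
\begin{itemize}
\item Fix $\varepsilon\in(0,1)$ and set $M_\varepsilon:=\sup_{t\ge\varepsilon}\bigl(\frac{t^p}{p}A+\frac{t^q}{q}B-\frac{t^{p^*}}{p^*}F\bigr)$.
\item On $[\varepsilon,\infty)$, $g_-<0$ as soon as $\lambda D\varepsilon^{\vartheta m'}>\vartheta m'M_\varepsilon$.
\item On $[0,\varepsilon]$, $g_-\le\frac{\varepsilon^p}{p}A+\max_{t\ge0}\bigl(\frac{t^q}{q}B-\frac{\lambda t^{\vartheta m'}}{\vartheta m'}D\bigr)$, and the last maximum tends to $0$ as $\lambda\to\infty$.
\end{itemize}
This needs $M_\varepsilon<\infty$, i.e.\ $F>0$, which is your second, smaller problem. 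The remark ``$F$ may vanish, but $g_\pm\to-\infty$ anyway by Lemma \ref{pl106}(ii)'' is false. For instance, for $\mathcal{J}_+$ with $\vartheta m'>\tau$, $F=0$ gives $g_+\to+\infty$; that lemma tacitly uses $\int\mathcal{K}|u|^{p^*}>0$, and the paper's choice of $u_0$ has the same tacit requirement. So $u_0\ge0$ must be chosen from the start with both $\int\mathcal{W}u_0^\tau>0$ and $\int\mathcal{K}u_0^{p^*}>0$, not ``if needed''. This is possible because $\mathcal{K}$ is continuous and nontrivial.

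With $F>0$, your splitting for $\mathcal{J}_+$ when $\tau\le\vartheta m'$ does close:
\begin{itemize}
\item On $[0,1]$, $t^{\vartheta m'}\le t^\tau$ gives $g_+\le\frac{t^p}{p}A+\frac{t^q}{q}B-\bigl(\frac{\beta E}{\tau}-\frac{\lambda D}{\vartheta m'}\bigr)t^\tau$, whose maximum tends to $0$ as $\beta\to\infty$.
\item On $[1,\infty)$, for $\beta$ large, $\frac{t^p}{p}A+\frac{t^q}{q}B\le\frac{\beta E}{2\tau}t^\tau$. Young's inequality, using $\tau\le\vartheta m'<p^*$, gives $\frac{\lambda D}{\vartheta m'}t^{\vartheta m'}\le\frac{\beta E}{2\tau}t^\tau+\frac{F}{p^*}t^{p^*}$. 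Hence $g_+\le0$ there.
\end{itemize}
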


\begin{remark} Before proceeding with the proof, we present the following observations:
\begin{itemize}
    \item We point out that the case $\tau=\vartheta m'$ is handled both in $(a)$ and $(b)$, yielding the validity of $0< \hat c_-<c_{PS}$ either for $\beta$ large or for $\lambda$ large.
\item As it will be clear from the proof below, since the upper bound for $\hat c_+$ is obtained by a limit procedure, then Lemma \ref{c<csegnato} continues to be valid with $c_{PS}$ replaced by $c_{PS}'$.
\item In view of \eqref{cond-}, case (b) is excluded due to the incompatibility of the ranges for $\tau$. Consequently, existence in Theorem \ref{th1} is established only for sufficiently large values of $\beta$.
\end{itemize}
\end{remark}

\begin{proof}[Proof of Lemma \ref{c<csegnato}]
Take the open set $\Omega_\mathcal{W}$ where $\mathcal{W}$ is positive by \eqref{W}. Let $u_0\in X\setminus \{0\}$ with 
$|\mbox{supp}(u_0)\cap\Omega_{\mathcal{W}}|>0$ such that $u_0\ge0$ and $\|\nabla u_0\|_q>0$. Take any $t\ge 0$
$$\begin{aligned}
\mathcal{J}(tu_0)=\frac{t^p}{p}\|u_0\|_{1,p}^p&+\frac{t^q}{q}\|u_0\|_{1,q}^q\pm \frac{\lambda}{ \vartheta m'}t^{\vartheta m'}\int_{\mathbb R^3} \phi_{u_0} |u_0|^{\vartheta}{\rm d} x\\&-\beta\frac{t^\tau}{\tau}\int_{\mathbb R^3} \mathcal{W}(x)|u_0|^{\tau}{\rm d} x-\frac{t^{p^*}}{p^*}\int_{\mathbb R^3} \mathcal{K}(x)|u_0|^{p^*}{\rm d} x.
\end{aligned}$$
Now, from $q<p\le \tau<p^*$ and either by \eqref{defthetamain} in case $\mathcal{J}_-$ or by $\max\{1,q/(m^*)'\}<\vartheta<p^*/m'$ in case $\mathcal{J}_+$, 
it follows that
$\mathcal{J}(tu_0)\to 0^+$ as $t\to0^+$ being positive the coefficient of the lower order term in $t$, while $\mathcal{J}(tu_0)\to-\infty$ as $t\to\infty$, being negative the coefficient of the higher order term.
Thus, there exists $t_{\lambda,\beta}>0$ 
such that
$$\max_{t\ge 0}\mathcal{J}(tu_0)=\mathcal{J}(t_{\lambda,\beta} u_0).$$
In particular, being $\mathcal{J}\in C^1$, we get
\begin{equation}\label{dtjt0}\begin{aligned}0&=\frac{d}{dt}\Bigl[
\mathcal{J}(tu_0)\Bigr]_{t=t_{\lambda,\beta}}\\&=t_{\lambda,\beta}^{p-1}\|\nabla u_0\|_p^p+t_{\lambda,\beta}^{q-1}\|\nabla u_0\|_q^q
\pm \lambda t^{\vartheta m'-1}\int_{\mathbb R^3} \phi_{u_0} u_0^{\vartheta}{\rm d} x\\&\quad -\beta t_{\lambda,\beta}^{\tau-1}\int_{\mathbb R^3}\mathcal{W}(x)u_0^{\tau}\, {\rm d} x -t_{\lambda,\beta}^{p^*-1}\int_{\mathbb R^3}\mathcal{K}(x)  u_0^{p^*}
\,{\rm d} x.\end{aligned}\end{equation}
Now we divide the proof into cases.

{\it Case $\mathcal{J}_+$:} In this case \eqref{dtjt0} is equivalent to
\begin{equation}\label{primozero}\begin{aligned}
\frac{\|\nabla u_0\|_p^p}{t_{\lambda,\beta}^{\tau -p}}+\frac{\|\nabla u_0\|_q^q}{t_{\lambda,\beta}^{\tau-q}}
 &-t_{\lambda,\beta}^{p^*-\tau}\int_{\mathbb R^3}\mathcal{K}(x)  u_0^{p^*} \,{\rm d} x
+ \frac{\lambda}{t_{\lambda,\beta}^{\tau-\vartheta m'}}\int_{\mathbb R^3} \phi_{u_0} u_0^{\vartheta}{\rm d} x\\&=\beta \int_{\mathbb R^3}\mathcal{W}(x)u_0^{\tau}\, {\rm d} x\end{aligned}\end{equation}
 for every $\lambda, \beta>0$. Since the support of $u_0$ is contained in $\Omega_\mathcal{W}$, the right hand side of
\eqref{primozero} is positive and it goes to $\infty$ if $\beta\to\infty$. Thus, also the left hand side of
\eqref{primozero} must go to $\infty$ if $\beta\to\infty$ and for all $\lambda$. Being $q<p\le \tau<p^*$, necessarily $t_{\lambda,\beta}\to 0^+$ as $\beta\to\infty$
and for all $\lambda>0$ since 
$$\begin{aligned}
\frac{\|\nabla u_0\|_p^p}{t_{\lambda,\beta}^{\tau -p}}&+\frac{\|\nabla u_0\|_q^q}{t_{\lambda,\beta}^{\tau-q}}
 -t_{\lambda,\beta}^{p^*-\tau}\int_{\mathbb R^3}\mathcal{K}(x)  u_0^{p^*} \,{\rm d} x
+ \frac{\lambda}{t_{\lambda,\beta}^{\tau-\vartheta m'}}\int_{\mathbb R^3} \phi_{u_0} u_0^{\vartheta}{\rm d} x\\&
\ge \frac{\|\nabla u_0\|_q^q}{t_{\lambda,\beta}^{\tau-q}}
 -t_{\lambda,\beta}^{p^*-\tau}\int_{\mathbb R^3}\mathcal{K}(x)  u_0^{p^*} \,{\rm d} x \sim \frac{\|\nabla u_0\|_q^q}{t_{\lambda,\beta}^{\tau-q}} \to\infty,
\end{aligned}$$
as $\beta\to\infty$.
From $\mathcal{J}(t_{\lambda,\beta} u_0)\to 0^+$ as $t_{\lambda,\beta}\to0^+$ or equivalently when $\beta\to\infty$,
 we can conclude that there exists $\beta^*>0$ such that for all $\beta>\beta^*$ and $\lambda>0$
\begin{equation}\label{maxj+}
\max_{t\ge0}\mathcal{J}_+(tu_0)=\mathcal{J}_+(t_{\lambda,\beta} u_0)<c_{PS}.
\end{equation}
By the definition of $\hat c_+$, we get $\hat c_+<c_{PS}$
for all $\beta>\beta^*$ and $\lambda>0$.

{\it Case $\mathcal{J}_-$:} We split the the interval for $\tau$ in two disjoint intervals. 

If $p \le\tau< \vartheta m'$ and $\max\{1,q/(m^*)',p/m'\}<\vartheta<p^*/(m^*)'$, we use \eqref{primozero} with $\lambda$ replaced with $-\lambda$, to deduce that $t_{\lambda,\beta}\to 0^+$ as $\beta\to\infty$
and for all $\lambda>0$ since
$$\begin{aligned}
\frac{\|\nabla u_0\|_p^p}{t_{\lambda,\beta}^{\tau -p}}&+\frac{\|\nabla u_0\|_q^q}{t_{\lambda,\beta}^{\tau-q}}
 -t_{\lambda,\beta}^{p^*-\tau}\int_{\mathbb R^3}\mathcal{K}(x)  u_0^{p^*} \,{\rm d} x
- t_{\lambda,\beta}^{\vartheta m'-\tau}\lambda\int_{\mathbb R^3} \phi_{u_0} u_0^{\vartheta}{\rm d} x\\&
\ge \frac{\|\nabla u_0\|_q^q}{t_{\lambda,\beta}^{\tau-q}}
 -t_{\lambda,\beta}^{p^*-\tau}\int_{\mathbb R^3}\mathcal{K}(x)  u_0^{p^*} \,{\rm d} x- t_{\lambda,\beta}^{\vartheta m'-\tau}\lambda\int_{\mathbb R^3} \phi_{u_0} u_0^{\vartheta}{\rm d} x\\& \sim \frac{\|\nabla u_0\|_q^q}{t_{\lambda,\beta}^{\tau-q}} \to\infty,
\end{aligned}$$
as $\beta\to\infty$. Thus, arguing as in \eqref{maxj+} replacing $\mathcal{J}_+$ with respectively $\mathcal{J}_-$, we get $\hat c_-<c_{PS}$
for all $\beta>\beta^*$ and $\lambda>0$.

While, if $\vartheta m'\le \tau<p^*$ and $\max\{1,q/(m^*)'\}<\vartheta<p^*/m'$, we write \eqref{dtjt0} as follows
\begin{equation}\label{primozero'}\begin{aligned}
\frac{\|\nabla u_0\|_p^p}{t_{\lambda,\beta}^{\vartheta m' -p}}&+\frac{\|\nabla u_0\|_q^q}{t_{\lambda,\beta}^{\vartheta m'-q}}
 -t_{\lambda,\beta}^{p^*-\vartheta m'}\int_{\mathbb R^3}\mathcal{K}(x)  u_0^{p^*} \,{\rm d} x
 \\&-\beta t_{\lambda,\beta}^{\tau-\vartheta m'}\int_{\mathbb R^3}\mathcal{W}(x)u_0^{\tau}{\rm d} x= \lambda \int_{\mathbb R^3} \phi_{u_0} |u_0|^{\vartheta}{\rm d} x
 \end{aligned}\end{equation}
for every $\beta,\lambda>0$. Being $u_0$ is nontrivial, then the right hand side of
\eqref{primozero'} is  positive and goes to $\infty$ if $\lambda\to\infty$. Thus, also the left hand side of
\eqref{primozero'} must go to $\infty$ if $\lambda\to\infty$, this occurs if $t_{\lambda,\beta}\to 0^+$ as $\lambda\to\infty$ being $\vartheta>q/(m^*)'>q/m'$ and $\|\nabla u_0\|_q>0$ since
$$\begin{aligned}
\frac{\|\nabla u_0\|_p^p}{t_{\lambda,\beta}^{\vartheta m' -p}}&+\frac{\|\nabla u_0\|_q^q}{t_{\lambda,\beta}^{\vartheta m'-q}}
 -t_{\lambda,\beta}^{p^*-\vartheta m'}\int_{\mathbb R^3}\mathcal{K}(x)  u_0^{p^*} \,{\rm d} x
 -\beta t_{\lambda,\beta}^{\tau-\vartheta m'}\int_{\mathbb R^3}\mathcal{W}(x)u_0^{\tau}{\rm d} x\\&
 \ge \frac{\|\nabla u_0\|_q^q}{t_{\lambda,\beta}^{\vartheta m'-q}}
 -t_{\lambda,\beta}^{p^*-\vartheta m'}\int_{\mathbb R^3}\mathcal{K}(x)  u_0^{p^*} \,{\rm d} x
 -\beta t_{\lambda,\beta}^{\tau-\vartheta m'}\int_{\mathbb R^3}\mathcal{W}(x)u_0^{\tau}{\rm d} x \\&\sim \frac{\|\nabla u_0\|_q^q}{t_{\lambda,\beta}^{\vartheta m'-q}} \to\infty
\end{aligned}$$
as $\lambda\to\infty$. From $\mathcal{J}(t_{\lambda,\beta} u_0)\to 0^+$ as $t_{\lambda,\beta}\to0^+$ or equivalently when $\lambda\to\infty$,
 we can conclude that there exists $\lambda^*>0$ such that for all $\lambda>\lambda^*$ and $\beta>0$
\begin{equation*}
\max_{t\ge0}\mathcal{J}_-(tu_0)=\mathcal{J}_-(t_{\lambda,\beta} u_0)<c_{PS}.
\end{equation*}
By the definition of $\hat c_-$, we get $\hat c_-<c_{PS}$
for all $\lambda>\lambda^*$ and $\beta>0$.
\end{proof}

Finally, we have all the ingredients to conclude the proof of the main theorem of the paper.

{\it Proof of Theorem \ref{th1}.} Under the different assumptions on $\tau,\vartheta$ in the statement of Theorem \ref{th1}, Lemma \ref{pl106} implies that the energy functional $\mathcal{J}$ has the mountain pass geometry. Thus, Theorem \ref{mptheorem} can be applied giving the existence of a Palais-Smale sequence $(u_n)_n\subset X$ at level $c_u$. By employing the boundedness of such sequence given by Lemma \ref{lembound}, we proved a compactness assumption in Lemma \ref{pscomp} in terms of the validity of the Palais-Smale condition for suitable levels, and finally Lemma \ref{c<csegnato} confirms that $(u_n)_n\subset X$ converges, up to subsequences, to a nontrivial function $u\in X$ having positive energy and for which it holds $\mathcal{J}'(u_n)\varphi=0$ for all $\varphi\in X$, i.e. $u$ is a weak solution to \eqref{maineq}.

\section*{Acknowledgments}

The authors are members of the {\em Gruppo Nazionale per l'Analisi Ma\-te\-ma\-ti\-ca, la Probabilit\`a e le loro Applicazioni} (GNAMPA) of the {\em Istituto Nazionale di Alta Matematica} (INdAM). L.B is partially supported by INdAM-GNAMPA Project 2026 titled \textit{Structural degeneracy and criticality in (sub)elliptic PDEs} (E53C25002010001) and by the Deutsche Forschungsgemeinschaft (DFG, German Research Foundation) - Project-ID 258734477 - SFB 1173.


\begin{thebibliography}{99}

\bibitem{AmbrosettiMilan} A. Ambrosetti, {\it On Schr\"odinger-Poisson systems},  Milan J. Math  {\bf 76} (2008), 257--274.

\bibitem{AR} A. Ambrosetti and P. H. Rabinowitz, {\it Dual variational methods in critical point theory and applications},  J. Functional
Analysis  {\bf 14} (1973), 349--381.

\bibitem{amru} A. Ambrosetti and D. Ruiz, {\it Multiple bound states for the Schr\"odinger-Poisson problem},  Commun. Contemp. Math.  {\bf 10} (2008), 391--404.

\bibitem{AAP} A. Azzollini, P. d’Avenia, and A. Pomponio, {\it On the Schr\"odinger-Maxwell equations under the effect of a general
nonlinear term} Ann. Inst. H. Poincar\'e C Anal. Non Lin\'eaire {\bf 27}(2010) 779--791.

\bibitem{AP} A. Azzollini and A. Pomponio, {\it Ground state solutions for the nonlinear Schr\"odinger-Maxwell equations} J. Math.
Anal. Appl. {\bf 345} (2008) 90--108.

\bibitem{analisimoderna} M. Badiale and E. Serra, {\it Semilinear elliptic equations for beginners}, Universitext Springer, London, 2011.

\bibitem{BBF} L. Baldelli, Y. Brizi, and R. Filippucci, {\it Multiplicity results for $(p, q)$-Laplacian equations with critical exponent in $\mathbb R^N$ and negative energy} Calc. Var. Partial Differential Equations {\bf 60} (2021) p. 30.

\bibitem{BFccm} L. Baldelli and R. Filippucci, {\it Existence of solutions for critical $(p, q)$-Laplacian equations in $\mathbb R^N$}
 Commun. Contemp. Math. {\bf 25} (2023) p. 26.

\bibitem{BNTW} A. K. Ben-Naoum, C. Troestler, and M. Willem, {\it Extrema problems with critical Sobolev exponents on unbounded
domains} Nonlinear Anal. {\bf 26} (1996) 823--833.

\bibitem{BDAFP00} V. Benci, P. D’Avenia, D. Fortunato, and L. Pisani, {\it Solitons in several space dimensions: Derrick’s problem and
infinitely many solutions} Arch. Ration. Mech. Anal. {\bf 154} (2000) 297--324.

\bibitem{articolo_6_1} V. Benci and D. Fortunato, {\it An eigenvalue problem for the Schr\"odinger-Maxwell equations} Topol. Methods Nonlinear Anal. {\bf 11} (1998) 283--293.

\bibitem{articolo_7_1} V. Benci and D. Fortunato, {\it Solitary waves of the nonlinear Klein-Gordon equation coupled with the Maxwell equations} Rev. Math. Phys. {\bf 14} (2002) 409--420.

\bibitem{BInat} M. Born and L. Infeld, {\it Foundations of the new field theory} Nature {\bf 132} (1933).

\bibitem{BI} M. Born and L. Infeld, {\it Foundations of the new field theory} Proc. Roy. Soc. London Ser. A {\bf 144} (1934) 425--451.

\bibitem{Brezisaf} H. Brezis, {\it Functional analysis, Sobolev spaces and partial differential equations} Universitext Springer, New York, 2011.

\bibitem{BL1983} H. Brezis and E. Lieb, {\it A relation between pointwise convergence of functions and convergence of functionals} Proc. Amer. Math. Soc. {\bf 88} (1983) 486--490.

\bibitem{BN} H. Brezis and L. Nirenberg, {\it Positive solutions of nonlinear elliptic equations involving critical Sobolev exponents}
Comm. Pure Appl. Math. {\bf 36} (1983) 437--477.

\bibitem{CDM} G. Caristi, L. D’Ambrosio, and E. Mitidieri, {\it Representation formulae for solutions to some classes of higher order
systems and related Liouville theorems} Milan J. Math. {\bf 76} (2008) 27--67.

\bibitem{CLR} D. Cassani, Z. Liu, and G. Romani, {\it Nonlocal planar Schr\"odinger-Poisson systems in the fractional Sobolev limiting
case} J. Differential Equations {\bf 383} (2024) 214--269.

\bibitem{GC} D. Cassani, Z. Liu, and G. Romani, {\it Nonlocal Schr\"odinger-Poisson systems in $\mathbb R^N$: the fractional Sobolev limiting
case} Rend. Istit. Mat. Univ. Trieste {\bf 57} (2025) p. 33.

\bibitem{CW} S. Cingolani and T. Weth, {\it On the planar Schr\"odinger-Poisson system} Ann. Inst. H. Poincar\'e C Anal. Non Lin\'eaire {\bf 33} (2016) 169--197.

\bibitem{coc} G. M. Coclite, {\it A multiplicity result for the nonlinear Schr\"odinger-Maxwell equations} Commun. Appl. Anal. {\bf 7} (2003) 417--423.

\bibitem{DMP1} L. D’Ambrosio, E. Mitidieri, and S. I. Pohozaev, {\it Representation formulae and inequalities for solutions of a class
of second order partial differential equations} Trans. Amer. Math. Soc. {\bf 358} (2006) 893--910.

\bibitem{aprile} T. D’Aprile, {\it Semiclassical states for the nonlinear Schr\"odinger equation with the electromagnetic field} NoDEA Nonlinear Differential Equations Appl, {\bf 13} (2007) 655--681.

\bibitem{aprileW} T. D’Aprile and J. Wei, {\it On bound states concentrating on spheres for the Maxwell-Schr\"odinger equation} SIAM J. Math. Anal. {\bf 37} (2005) 321--342.

\bibitem{avenia} P. d’Avenia, {\it Non-radially symmetric solutions of nonlinear Schr\"odinger equation coupled with Maxwell equations}
Adv. Nonlinear Stud. {\bf 2} (2002) 177--192.

\bibitem{gDeK} G. H. Derrick, {\it Comments on nonlinear wave equations as models for elementary particles} J. Mathematical Phys. {\bf 5} (1964) 1252--1254.

\bibitem{DH} P. Dr\'abek and Y. X. Huang, {\it Multiplicity of positive solutions for some quasilinear elliptic equation in $\mathbb R^N$ with critical Sobolev exponent} J. Differential Equations {\bf 140} (1997) 106--132.

\bibitem{DSW2022} Y. Du, J. Su, and C. Wang, {\it On the critical Schr\"odinger-Poisson system with $p$-Laplacian} Commun. Pure Appl. Anal. {\bf 21} (2022) 1329--1342.

\bibitem{DSW23} Y. Du, J. Su, and C. Wang, {\it The quasilinear Schr\"odinger-Poisson system} J. Math. Phys. {\bf 64} (2023) p.20.

\bibitem{FPR} R. Filippucci, P. Pucci, and V. R\v adulescu, {\it Existence and non-existence results for quasilinear elliptic exterior problems with nonlinear boundary conditions} Comm. Partial Differential Equations {\bf 33} (2008) 706--717.

\bibitem{GV88T} M. Guedda and L. V\'eron, {\it Bifurcation phenomena associated to the $p$-Laplace operator} Trans. Amer. Math. Soc.
{\bf 310} (1988) 419--431.

\bibitem{GV88} M. Guedda and L. V\'eron, {\it Local and global properties of solutions of quasilinear elliptic equations} J. Differential Equations, {\bf 76} (1988) 159--189.

\bibitem{GV89} M. Guedda and L. V\'eron, {\it Quasilinear elliptic equations involving critical Sobolev exponents} Nonlinear Anal. {\bf 13} (1989) 879--902.

\bibitem{HIT} J. Hirata, N. Ikoma, and K. Tanaka, {\it Nonlinear scalar field equations in $\mathbb R^N$: mountain pass and symmetric mountain pass approaches} Topol. Methods Nonlinear Anal. {\bf 35} (2010) 253--276.

\bibitem{HS24} L. Huang and J. Su, {\it Multiple positive solutions of the quasilinear Schr\"odinger-Poisson system with critical exponent in $D^{1,p}(\mathbb R^3)$} J. Math. Phys. {\bf 65} (2024) p. 16.

\bibitem{J} L. Jeanjean, {\it Existence of solutions with prescribed norm for semilinear elliptic equations} Nonlinear Anal.
{\bf 28} (1997) 1633--1659.

\bibitem{JLC} L. Jeanjean and S. Le Coz, {\it An existence and stability result for standing waves of nonlinear Schr\"odinger equations} Adv. Differential Equations {\bf 11} (2006) 813--840.

\bibitem{LLS} Y. Li, F. Li, and J. Shi, {\it Existence of a positive solution to Kirchhoff type problems without compactness conditions}
J. Differential Equations {\bf 253} (2012) 2285--2294.

\bibitem{LPNX} S. Liang, P. Pucci, T. V. Nguyen, and D. Xiao, {\it Multiplicity and concentration of normalized solutions for double
critical Schr\"odinger-Poisson systems involving the fractional $p$-Laplacian in $\mathbb R^3$} Nonlinear Anal. {\bf 269} (2026) p. 30.

\bibitem{L84} P.-L. Lions, {\it The concentration-compactness principle in the calculus of variations. The locally compact case. II} Ann. Inst. H. Poincar\'e Anal. Non Lin\'eaire {\bf 1} (1984) 223--283.

\bibitem{L3} P.-L. Lions, {\it The concentration-compactness principle in the calculus of variations. The limit case. I} Rev. Mat. Iberoamericana, {\bf 1} (1985) 145--201.

\bibitem{MP1} E. Mitidieri and S. I. Pokhozhaev, {\it The positivity property of solutions of some nonlinear elliptic inequalities in $\mathbb R^n$} Dokl. Akad. Nauk {\bf 393} (2003) 159--164.

\bibitem{MZ} V. Moroz and J. Van Schaftingen, {\it Existence of groundstates for a class of nonlinear Choquard equations} Trans. Amer. Math. Soc. {\bf 367} (2015) 6557--6579.

\bibitem{PLJ2024} H. Pu, S. Liang, and S. Ji, {\it Nodal solutions to $(p, q)$-Laplacian equations with critical growth} Asymptot. Anal. {\bf 136} (2024) 133--156.

\bibitem{PS} P. Pucci and J. Serrin, {\it A general variational identity} Indiana Univ. Math. J. {\bf 35} (1986) 681--703.

\bibitem{articolopc_19} D. Ruiz, {\it The Schr\"odinger-Poisson equation under the effect of a nonlinear local term} J. Funct. Anal. {\bf 237} (2006) 655--674.

\bibitem{simon} J. Simon, {\it R\'egularit\'e de la solution d’une  \'equation non lin\'eaire dans $\mathbb R^N$} Lecture Notes in Math. Springer, Berlin {\bf 665} (1978) 205--227. 

\bibitem{SHR} Y. Song, Y. Huo, and D. D. Repov\v s, {\it On the Schr\"odinger-Poisson system with $(p, q)$-Laplacian} Appl. Math. Lett. {\bf 141} (2023).

\bibitem{SY} C. A. Swanson and L. S. Yu, {\it Critical $p$-Laplacian problems in $\mathbb R^N$} Ann. Mat. Pura Appl. {\bf 169} (1995), 233–250, 1995.

\bibitem{TZ23} M. Tao and B. Zhang, {\it Existence results for nonhomogeneous fractional Schr\"odinger-Poisson systems involving critical exponents}. Differential Integral Equations {\bf 36} (2023) 21--44.

\bibitem{Tolk} P. Tolksdorf, {\it Regularity for a more general class of quasilinear elliptic equations} J. Differential Equations {\bf 51} (1984) 126--150.

\bibitem{vaira} G. Vaira, {\it Ground states for Schr\"odinger-Poisson type systems} Ric. Mat. {\bf 60} (2011) 263--297.

\bibitem{WS25} L. Wei and Y. Song, {\it Normalized solutions for critical Schr\"odinger equations involving $(2, q)$-Laplacian} Opuscula Math. {\bf 45} (2015) 685--716.

\bibitem{Willemminimax} M. Willem, {\it Minimax theorems}  Progress in Nonlinear Differential Equations and their Applications {\bf 24}. Birkh\"auser Boston, Inc., Boston, MA, 1996.

\bibitem{CLW} C. Xiaoxiao, L. Anran, and W. Chongqing, {\it Solutions to non-homogeneous Schr\"odinger-Poisson system involving a $(p, q)$-laplacian operator} Journal of Applied Analysis and Computation {\bf 16} (2026) 1923--1950.

\bibitem{ZDS} J. Zhang, J. a. M. do O, and M. Squassina, {\it Fractional Schr\"odinger-Poisson systems with a general subcritical or critical nonlinearity} Adv. Nonlinear Stud. {\bf 16} (2016) 15--30.

\bibitem{ZZ2009} L. Zhao and F. Zhao, {\it Positive solutions for Schr\"odinger-Poisson equations with a critical exponent} Nonlinear Anal. {\bf 70} (2009) 2150--2164.

\bibitem{Zh86} V. V. Zhikov, {\it Averaging of functionals of the calculus of variations and elasticity theory} Izv. Akad. Nauk SSSR Ser. Mat. {\bf 50} (1986) 675--710.




\end{thebibliography}
\end{document}